\setlist[itemize]{leftmargin=15mm}
\setlist{nolistsep}
\declaretheoremstyle[
headfont=\normalfont\bfseries,
parent=subsection,
bodyfont=\normalfont,
qed=$\dashv$,
]{def2}
\declaretheorem[style = def2, title = Definition]{defn}
\theoremstyle{definition}
\newtheorem{example}[defn]{Example}
\newtheorem{openq*}{Open Question}
\newtheorem{openq}[defn]{Open Question}
\theoremstyle{remark}
\newtheorem{no-rem}[defn]{Remark}
\theoremstyle{plain}
\newtheorem{prop}[defn]{Proposition}
\newtheorem{lem}[defn]{Lemma}
\newtheorem{claim}[defn]{Claim}
\newenvironment{manualthm}[1]{%
  \IfBlankTF{#1}
    {}
    {}%
  \manualtheoreminner
}{\endmanualtheoreminner}
\newcounter{cases}
\newcounter{subcases}[cases]
\def\l@subsection{\@tocline{2}{0pt}{1pc}{5pc}{}} \def\l@subsection{\@tocline{2}{0pt}{2pc}{6pc}{}} \makeatother
\newcommand{\ordb}{\textup{ord}(b)}
\newcommand{\ordbb}{\textup{ord(}{\ensuremath{\bar{b}}})}
\newcommand{\ordg}{\textup{ord(}{\ensuremath{\bar{g}}})}
\title{On the Effectiveness of Partition Regularity over Algebraic Structures}
\author{Gabriela Laboska}
\thanks{This work is part of the author's doctoral thesis at the University of Chicago, jointly advised by Denis Hirschfeldt and Maryanthe Malliaris.}
\address{Department of Mathematics\\
	University of Chicago\\
	5734 S University Ave\\
	Chicago, IL 60637}
\email[Gabriela Laboska]{gabrielap@uchicago.edu}
\begin{document}
\maketitle

\begin{abstract}

Partition regularity over algebraic structures is a topic in Ramsey theory that has been extensively researched by combinatorialists (\cite{cite2}, \cite{cite3}, \cite{cite5}, \cite{cite15}). 
Motivated by recent work in this area, we investigate the computability-theoretic and reverse-mathematical aspects of partition regularity over algebraic structures—an area that, to the best of our knowledge, has not been explored before. This paper focuses on a 1975 theorem by Straus \cite{cite24}, which has played a significant role in many of the results in this field.
\end{abstract}

\tableofcontents

\section{Introduction}

\vspace{5mm}

The computability-theoretic and reverse-mathematical analysis of combinatorial principles has a long history and has been especially fruitful (see,
for example,  \cite{cite12}). In this paper, we analyze results in a subfield of Ramsey
theory called partition regularity over algebraic structures. This topic has been extensively researched by combinatorialists,
with new important results in the last few years (\cite{cite5}, \cite{cite15}). However, the computability-theoretic and
reverse-mathematical analysis of partition regularity over algebraic structures is something that, to
our knowledge, is done for the first time.

Partition regularity, in the context that we are going to be talking
about in this paper, was the main topic in Richard Rado's doctoral
thesis. Rado's goal was to generalize some of the earliest results
in Ramsey theory, like Schur's theorem and Van der Waerden's theorem.
These theorems, in their original form or restated, talk about linear
equations or systems of equations that have
monochromatic solutions under \textit{any} finite coloring of the underlying algebraic structure that we are
considering.
To make this more precise, let us consider the set of positive integers
$\mathbb{N}\setminus\{0\}$ as our underlying structure. If we are
given a system of linear equations over $\mathbb{N}\setminus\{0\}$
in several variables, and color $\mathbb{N}\setminus\{0\}$ with
finitely many colors, can we always find a nonzero monochromatic solution? Schur's Theorem answers this question in the
positive for the equation $x+y-z=0$.

The systems of linear equations that have a monochromatic solution under any finite coloring of the underlying structure
are called partition regular systems. We are going to give the precise
definitions in Section 2.1. Rado met his goal -- he found a characterization
of all systems that are partition regular, separately for the homogeneous
and inhomogeneous case. For the inhomogeneous case, which is the case
we are going to be investigating in this paper, Rado showed the following.

\begin{manualthm}{}[Rado]

An inhomogeneous system of linear equations is partition regular over $\mathbb{Z}$ if and only if it has a constant solution.

\end{manualthm}

Some natural questions were raised after Rado showed this result.
One of these questions was what happens if we replace the integers
with any other commutative ring $R$? Can we still characterize the
systems that are partition regular over $R$? Significant progress towards answering this question was made by several combinatorialists, see \cite{cite3} and \cite{cite5}, and nearly a hundred years later,
it was finally resolved by Leader and Russell in \cite{cite15}. 

\begin{manualthm}{}[Leader-Russell]
An inhomogeneous system of linear equations is partition regular over a commutative ring $R$ if
and only if it has a constant solution. 
\end{manualthm}

Their result is interesting not only because they solved this open problem, but because of their
approach in solving it, which was different from the previous work on the subject. The authors
that worked on versions of the problem previously, including Rado himself, used an indirect approach in similar results -- they first
showed that the result holds for one equation and then showed how to pass from one equation to a
system of equations. Leader and Russell’s approach was direct and with that, they gave a new proof
to Rado’s original theorem. Their result and its proof were the main
motivation behind this research project.

While exploring these new results, as well as earlier papers attempting to address the same question, we came across a result by Straus from a 1975 paper \cite{cite24}. Whether in
a direct application or as an idea to follow, this theorem, that we
are going to refer to as Straus' theorem, shows up in nearly all of
these papers, so we felt that it was essential to start with its analysis.
\begin{defn}
Let $G$ be any nonempty set, $k\in\mathbb{N},k>0$ and $c:G\rightarrow k$ be any coloring
of $G$ with $k$ colors. We say that the tuple $\left(x_{1},y_{1},x_{2},y_{2},\dots,x_{n},y_{n}\right)\in G^{2n}$
is\textit{ pairwise monochromatic for $c$} if for all $i=1,\dots,n$,
$c(x_{i})=c(y_{i})$.
\end{defn}

\begin{manualthm}{}[Straus]

Let $n\in\mathbb{N},n\geq1$. Let $G$ be an abelian group and $b\in G,b\neq0$ an element
of $G$. Then there is a finite $k$-coloring of $G$ such that the
equation

\begin{equation}
(x_{1}-y_{1})+(x_{2}-y_{2})+\dots+(x_{n}-y_{n})=b
\end{equation}

\noindent has no pairwise monochromatic solutions, where

\[
k=\begin{cases}
\hspace{4mm}2n & \text{if }2\mid \ordb \text{ or } \ordb=\infty\\
\left\lceil \dfrac{2np}{p-1}\right\rceil  & \text{if }\ordb \text{ is odd and }p\text{ is the largest prime divisor of }\ordb
\end{cases}.
\]

\end{manualthm}

For simplicity, we refer to this
result as Straus' theorem; however, the following more general result is shown in Straus' paper \cite{cite24}.

\begin{manualthm}{}[Straus*]

Let $n\in\mathbb{N},n\geq1$. Let $(G,+,0)$ be an abelian group and $b\in G,b\neq0$ an element of $G$. Let $f_1,\dots,f_n$ be arbitrary mappings from $G$ to $G$, with $m\leq n$ of them being distinct.
Then there is a finite $k$-coloring $c$ of $G$ such that the equation

\begin{equation}
(f_1(x_{1})-f_1(y_{1}))+(f_2(x_{2})-f_2(y_{2}))+\dots+(f_n(x_{n})-f_n(y_{n}))=b
\end{equation}

\vspace{3mm}
\noindent has no pairwise monochromatic solutions, where
\[
k=\begin{cases}
\hspace{4mm}(2n)^m & \text{if }2\mid \ordb \text{ or } \ordb=\infty\\
\left\lceil \dfrac{2np}{p-1}\right\rceil^m  & \text{if }\ordb \text{ is odd and }p\text{ is the largest prime divisor of }\ordb
\end{cases}.
\]
\end{manualthm}

Our results apply to the more general theorem as well and they will be denoted by *.

In this paper we analyze this result and its implications from a computability-theoretic (Section 3) and reverse-mathematical (Section 4) point of view. 

From the computability-theoretic point of view, we show that this result does not hold computably, even in the simplest
case when $n=1$. We also show that the Turing degrees that capture
the complexity of this problem are the PA degrees.  The PA degrees compute paths in infinite binary (or $k$-branching for a fixed $k\in\mathbb{N},k\geq 2$) trees. They come up naturally in these types of problems since
you can think of the colorings of an algebraic structure as paths
through some binary (or $k$-branching for a fixed $k\in\mathbb{N},k\geq 2$) tree (see Section 2.2). 

\begin{manualthm}{3.1}
There is a computable abelian group $(G,+,0)$ and an element $b\in G,b\neq0$
such that if $c$ is a 2-coloring of $G$ for which the equation 
\[
x-y=b
\]
 has no monochromatic solutions, then $c$ has PA degree.
\end{manualthm}

Theorem 3.1 gives us a lower bound for this problem and it is not too hard to see that the PA degrees are the best possible bound (Proposition 3.0.7).

This theorem is the best possible result of this sort: if we have
more than $2$ colors, say $m>2$ colors, then for every computable
group we can find a computable $m$-coloring with no monochromatic
solutions to $x-y=b$ (Proposition 3.0.8).

The proof of Straus' theorem, which we will sketch at the beginning of the proof of Theorem 4.2,  shows that you can choose your coloring such that there exists an $m>0$ such
that for every $k\in\mathbb{Z}$, all the $kmb$ are monochromatic. We show that for the case $n\geq2$, Straus' theorem with this additional condition added does not hold computably, and moreover, the best possible bounds for this problem are again the PA degrees. 

\begin{manualthm}{3.2}
Let $n\in\mathbb{N},n\geq2$. There is a computable group $(G,+,0)$ and an element $b\in G,b\neq0$ such that, if $c$ is a $2n$-coloring of $G$ for which the equation $$(x_{1}-y_{1})+...+(x_{n}-y_{n})=b$$ has no pairwise monochromatic solutions and there is an $m>0$ such that for every $k \in \mathbb{Z}$, all the $kmb$ are colored with the same color, then $c$ has PA degree.
\end{manualthm}

In Theorem 3.3 we show that a more general version of Theorem 3.2 holds -- we can build a computable group $G$ and a $b\in G,b\neq 0$ such that \textit{any} finite coloring that satisfies the conditions of Theorem 3.2 has PA degree.

From the reverse-mathematical point of view, we can already make some
conclusions based on these results. The PA degrees suggest that Straus'
theorem might be equivalent to $\textup{WKL}_{0}$ over the base system
$\textup{RCA}_{0}$. Indeed, from the proof of Theorem 3.1, we
notice that the
$n=1$ case of Straus' theorem implies $\textup{WKL}_{0}$, so we have the following result.

\begin{manualthm}{4.1}
The following statements are equivalent over $\textup{RCA}_0\textup{:}$
\begin{itemize}
\item[$\textup{(i)}$] $\textup{WKL}_0$
\item[$\textup{(ii)}$]  For every abelian group $G$ and every element $b\in G,b\neq0$,
there is a $k$-coloring of $G$ for which the equation $x-y=b$ has
no monochromatic solutions, where
\[
k=\begin{cases}
\hspace{4mm}2 & \text{if }2\mid \ordb \text{ or } \ordb=\infty\\
\hspace{4mm}3   & \text{if }\ordb \text{ is odd }
\end{cases}.
\]
\end{itemize}
\end{manualthm}

Following the original proof of Straus' theorem, for the case $n \geq 2$ we show that $\textup{WKL}_{0}$ implies Straus' theorem over $\textup{RCA}_{0}$.

\begin{manualthm}{4.2}
Over $\textup{RCA}_0$, $\textup{WKL}_{0}$ implies the following statement$\textup{:}$

Let $n\in\mathbb{N},n\geq1$. For every abelian group $(G,+,0)$ and every $b\in G,b\neq0,$ there
is a $k$-coloring of $G$ such that the equation
\begin{equation*}
(x_{1}-y_{1})+\dots+(x_{n}-y_{n})=b
\end{equation*}

\noindent has no pairwise monochromatic solutions, where 
\[
k=\begin{cases}
\hspace{4mm}2n & \text{if }2\mid \ordb \text{ or } \ordb=\infty\\
\left\lceil \dfrac{2np}{p-1}\right\rceil  & \text{if }\ordb \text{ is odd and }p\text{ is the largest prime divisor of }\ordb
\end{cases}.
\]
\end{manualthm}

This result together with Theorem 3.2 suggest that the full Straus’ theorem is indeed
equivalent to $\textup{WKL}_0$ over $\textup{RCA}_0$.

Lastly, in section 4.3, we give a reverse-mathematical version of Theorem 3.3 and Theorem 3.4. The statements $\textup{(iii)}_{N, N\geq k}$ in Theorem 3.3 form an infinite schema of statements for every $N\in\mathbb{N}, N\geq k$, where $k$ is as stated below.

\begin{manualthm}{4.3}
The following statements are equivalent over $\textup{RCA}_0\textup{:}$
\begin{itemize}
\item[$\textup{(i)}$] $\textup{WKL}_0$
\item[$\textup{(ii)}$] Let $n\in \mathbb{N},n\geq2$. Let $(G,+,0)$ be an abelian group and $b\in G, b\neq 0$. Then there is a $k$-coloring of $G$ with no pairwise monochromatic solutions to 
\begin{equation*}
(x_{1}-y_{1})+\dots+(x_{n}-y_{n})=b
\end{equation*}
that satisfies the following condition: there is an $m>0$ such that for every $l\in \mathbb{Z}$, all the $lmb$ are colored with the same color, where
\[
k=\begin{cases}
\hspace{4mm}2n & \text{if }2\mid \ordb \text{ or } \ordb=\infty\\
\left\lceil \dfrac{2np}{p-1}\right\rceil  & \text{if }\ordb \text{ is odd and }p\text{ is the largest prime divisor of }\ordb
\end{cases}.
\]

\item[$\textup{(iii)}_{N \geq k}$]  Let $n\in \mathbb{N},n\geq2$. Let $(G,+,0)$ be an abelian group and $b\in G, b\neq 0$. Then there is an $N$-coloring of $G$ with no pairwise monochromatic solutions to 
\begin{equation*}
(x_{1}-y_{1})+\dots+(x_{n}-y_{n})=b
\end{equation*}
that satisfies the following condition: there is an $m>0$ such that for every $l\in \mathbb{Z}$, all the $lmb$ are colored with the same color, where 
\[
k=\begin{cases}
\hspace{4mm}2n & \text{if }2\mid \ordb \text{ or } \ordb=\infty\\
\left\lceil \dfrac{2np}{p-1}\right\rceil  & \text{if }\ordb \text{ is odd and }p\text{ is the largest prime divisor of }\ordb
\end{cases}.
\]
\end{itemize}

\end{manualthm}

\begin{manualthm}{4.4}
The following statements are equivalent over $\textup{RCA}_0\textup{:}$
\begin{itemize}
\item[$\textup{(i)}$] $\textup{WKL}_0$
\item[$\textup{(ii)}$]  Let $n\in \mathbb{N},n\geq2$. Let $(G,+,0)$ be an abelian group and $b\in G, b\neq 0$. Then there is a finite coloring of $G$ with no pairwise monochromatic solutions to 
\begin{equation*}
(x_{1}-y_{1})+\dots+(x_{n}-y_{n})=b
\end{equation*}
that satisfies the following condition: there is an $m>0$ such that for every $l\in \mathbb{Z}$, all the $lmb$ are colored with the same color.
\end{itemize}
\end{manualthm}

In Section 5, we give some potential directions for this research and some open questions that came up while writing this paper. The most pressing ones are the following.

\begin{openq*}
Are the PA degrees the best possible computability-theoretic bound for the full Straus' theorem? If not, what are the best possible bounds?
\end{openq*}

On the reverse mathematics side, the following question is still open, even though we show one direction of the equivalence in Theorem 4.2.

\begin{openq*}
Is the full Straus' theorem equivalent to $\textup{WKL}_0$ over $\textup{RCA}_0$?
\end{openq*}

Our proofs do not directly translate to proofs about rings, so the same computability-theoretic and reverse-mathematical analysis does not directly apply to the Leader-Russell theorem. Therefore, we have the following open questions.

\begin{openq*}
What are the best possible computability-theoretic bounds for the Leader-Russell theorem? What about the reverse-mathematical bounds?
\end{openq*}

We can also consider some more refined reducibilities like Weihruach reducibility or different base systems of axioms like $\textup{RCA}_0^*$ and ask whether our theorems hold under these conditions. These questions are interesting to ask since the theorem by Jockusch that we use in most of our results does not necessarily hold under these new conditions.

\newpage

\section{Background}

\vspace{5mm}

In this section, we give the necessary background for the new results in this paper. We aim for the material to be self-contained, but we also give references throughout.

\subsection{Partition Regularity}
Partition regularity is a subfield of Ramsey theory that generalizes some important combinatorial results, such as Schur's Theorem and Van der Waerden's Theorem. For further details on these topics, see \cite{cite9}.

\begin{defn}
Let $G$ be a nonempty set and $k \in \mathbb{N}, k>1$. A \textit{$k$-coloring} of $G$ is any map $c : G \rightarrow k$. We call the set $S \subset G$ \textit{monochromatic} if $c(x)=c(y)$ for all $x,y\in S$.
\end{defn}

Since we will mostly be talking about equations and system of equations and their monochromatic solutions, we define precisely what we mean by this. 

\begin{defn}
Let $G$ be an algebraic structure (abelian group, ring), $A$ an $m\times n$ matrix with entries in $G$ and $b\in\mathbb{G}^{m},b\neq0$. We call the $n$-tuple $(x_1,x_2,\dots,x_n)$ a \textit{monochromatic solution} to the  system of linear equations $Ax=b$ if $A\cdot[x_1,x_2,\dots,x_n]^{\textup{T}}=b$ and the set $\{x_1,x_2,\dots,x_n\}$ is monochromatic.
\end{defn}

\begin{manualthm}{}[Schur]
For every finite coloring of the positive integers, there
is a nonzero monochromatic solution to the equation $x+y-z=0$.
\end{manualthm}
The original statement of Van der Waerden's theorem is the following:

\begin{manualthm}{}[Van der Waerden]
 For every finite coloring of the positive integers,
there are arbitrarily long monochromatic arithmetic progressions.
\end{manualthm}

With some care, this theorem can be restated as a statement about
monochromatic solutions to a well-chosen system of equations. Rado's
goal in his thesis was to generalize these results.
\begin{defn}
Let $A$ be an $m\times n$ matrix with entries in $\mathbb{N}$
and $b\in\mathbb{N}^{m}$. We say that the system
of linear equations $Ax=b$ is \textit{partition regular over $\mathbb{N}\setminus\{0\}$}
if for every finite coloring of the positive integers, the system
has a nonzero monochromatic solution.
\end{defn}
Using this notion, we can restate both Schur's and Van der Waerden's
theorem into statements about a specific system of equations being
partition regular. For example, the restated Schur's theorem is the
following:

\begin{manualthm}{}[Schur, restated]
 Let $A=\left[\begin{array}{ccc}
1 & 1 & -1\end{array}\right]$ and $b=0.$ Then the system $Ax=b$ is partition regular over $\mathbb{N}\setminus\{0\}$.
\end{manualthm}
Rado successfully characterized all partition regular systems over
$\mathbb{N}\setminus\{0\}$,\textit{ }dealing separately with the
cases $b=0$ and $b\neq0$. The case that we are going to be investigating
is the case when $b\neq0$, which is sometimes referred to as inhomogeneous
partition regularity. \textit{From now on, unless we specify which case we
are referring to, partition regularity for us will mean inhomogeneous
partition regularity.} We refer the reader to \cite{cite17} for more information
on the $b=0$ case. Note that, unlike the case $b=0$, for the nonzero
case, we do not have to worry about excluding zero solutions, so we
are going to use the following definition of partition regularity
over $\mathbb{Z}$.
\begin{defn}
Let $A$ be an $m\times n$ matrix with entries in $\mathbb{Z}$ and
$b\in\mathbb{Z}^{m},b\neq0$. We say that the system of linear equations
$Ax=b$ is \textit{partition regular over $\mathbb{Z}$} if for every
finite coloring of $\mathbb{Z}$, the system has a monochromatic solution.
\end{defn}
\begin{example}
Here are some examples and non-examples of partition regularity. 
\begin{enumerate}
\item The system 
$$
\begin{aligned}
2x + 3y - z + 4w &= 24 \\
x - 4y + 2z + w &= 0 \\
3x + y - 5z + 2w &= 3 \\
x + 2y + z - 3w &= 3
\end{aligned}$$
 is partition regular over $\mathbb{Z}$ for
the simple reason that it has a constant solution $(3,3,3,3)$, so no
matter how we color $\mathbb{Z}$, it will always have a monochromatic
solution. Clearly, any other system of equations that has a constant solution will be partition regular.
\item The equation $x+y=3$ is not partition regular over $\mathbb{Z}$.
Consider, for example, the coloring where all even integers are colored
red and all odd integers are colored blue. Since the solutions of
$x+y=3$ are always pairs of an even and an odd integer, there will
be no monochromatic solution for this coloring.
\end{enumerate}
\end{example}

These examples are relatively simple, but it can get much more complicated
to show partition regularity/non-regularity for different systems
of equations. One thing to note in this example is that, even though
we did not use this explicitly, the second equation does not have
a constant solution. It turns out that this condition is necessary
and sufficient for partition regularity over $\mathbb{Z}$.

\begin{manualthm}{}[Rado]
 Let $A$ be an $m\times n$ matrix with entries in $\mathbb{Z}$
and $b\in\mathbb{Z}^{m},b\neq0$. The system of linear equations $Ax=b$
is partition regular over $\mathbb{Z}$ if and only if it has a constant
solution.
\end{manualthm}
A few natural questions arise from this result. If we replace $\mathbb{Z}$
with any other commutative ring, will the result still hold? If not,
can we somehow characterize the rings for which the result does hold?
We first need to define partition regularity over a ring. The above definition generalizes pretty naturally.
\begin{defn}
Let $R$ be a commutative ring, $A$ an $m\times n$ matrix with entries
in $R$ and $b\in R^{m}$, $b\neq0$. We say that the system of linear
equations $Ax=b$ is \textit{partition regular over $R$} if for every
finite coloring of $R$, the system has a monochromatic solution.
\end{defn}

Combinatorialists have been working on this problem ever since Rado's
original result. Bergelson, Deuber, Hindman and Lefmann \cite{cite3}
showed that the result holds for a certain class of integral domains,
namely, the class of integral domains that are not fields and have
the property that $R/(r)$ is finite for every $r\neq0$. Byszewski
and Krawczyk \cite{cite5} then extended the result to all integral domains
and also considered the case where $b$ has entries in some $R$-module
and we are coloring the module instead. Finally, in 2020, Leader
and Russell \cite{cite15} showed that the same characterization holds
over any commutative ring $R$.

\begin{manualthm}{}[Leader-Russell]
Let $R$ be a commutative ring, $A$ an $m\times n$
matrix with entries in $R$ and $b\in R^{m}$, $b\neq0$. The system
$Ax=b$ is partition regular over $R$ if and only if it has a constant
solution.
\end{manualthm}

Among all of the aforementioned papers, including the newest one by Leader and Russell, the result by Straus \cite{cite24} appears as a common theme. 
This result is used in most of these papers, either directly or as a motivation behind some of the proofs. 

\subsection{Computability Theory and Reverse Mathematics}

In this section, we are going to present the computability-theoretic
and reverse-mathematical background that we need for this paper. For
a deeper dive into computability, we refer the reader to \cite{cite19} and \cite{cite22},
and for reverse mathematics \cite{cite21} and \cite{cite23}.

When we say \textit{computable functions}/\textit{sets}, we mean functions/sets that can be computed by an algorithm in a finite amount of time. \textit{Computable algebraic structures} (groups, rings) in this paper will refer to algebraic structures for which both the underlying set and the algebraic operations are computable.

 The \textit{Turing degrees} are a measure of complexity in computability theory -- you can think
of two sets having the same Turing degree (or being Turing equivalent)
if they are ``equally hard'' to compute. Among the Turing degrees,
PA degrees are the ones that compute infinite paths on infinite computable
binary (or $k$-branching for a fixed positive $k\in\mathbb{Z}$)
trees. Here, by a \textit{tree} $T$ we mean a subset of $2^{< \mathbb{N}}$ (or $\mathbb{N}^{< \mathbb{N}}$) such that if $\sigma \in T$, then every initial segment of $\sigma$ is in $T$.
\begin{defn}
A set $X$ has \textit{PA degree} if every computable infinite binary
tree has an $X$-computable path.
\end{defn}

There are many equivalent statements of the PA degrees and the one we are going to use directly in Theorem 3.1 is the following.
\begin{prop}
A degree is PA if and only if it computes a total $\{0,1\}$-valued extension of the $\{0,1\}$-valued function $e\mapsto\Phi_{e}(e)$. 
\end{prop}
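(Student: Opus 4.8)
The plan is to prove both implications by translating between paths through computable binary trees and total $\{0,1\}$-valued extensions of the partial function $e\mapsto\Phi_e(e)$. Throughout, call $g$ such an extension if $g$ is total, $\{0,1\}$-valued, and $g(e)=\Phi_e(e)$ whenever $\Phi_e(e)\!\downarrow$ with value in $\{0,1\}$. It is worth noting that computing such a $g$ is the same as computing a set separating the disjoint c.e.\ sets $A=\{e:\Phi_e(e)\!\downarrow\,=0\}$ and $B=\{e:\Phi_e(e)\!\downarrow\,=1\}$, but the argument below does not need this reformulation explicitly.

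For the forward direction, assume $X$ has PA degree. I would build a computable tree $T\subseteq 2^{<\mathbb{N}}$ all of whose paths are extensions of $e\mapsto\Phi_e(e)$. Declare $\sigma\in T$ if and only if for every $e<|\sigma|$ such that $\Phi_{e,|\sigma|}(e)\!\downarrow$ with value in $\{0,1\}$, one has $\sigma(e)=\Phi_e(e)$. This $T$ is computable; it is closed under initial segments because a shorter string sees fewer halting computations; and it is infinite because the total function $f$ given by $f(e)=\Phi_e(e)$ when $\Phi_e(e)\!\downarrow\,\in\{0,1\}$ and $f(e)=0$ otherwise has all of its initial segments in $T$. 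Every path $P$ through $T$ is a total $\{0,1\}$-valued extension, since for each $e$ with $\Phi_e(e)\!\downarrow\,\in\{0,1\}$ the defining condition forces $P(e)=\Phi_e(e)$ at all sufficiently long initial segments. As $X$ has PA degree it computes a path through $T$, hence an extension $g$.

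For the reverse direction, assume $X$ computes an extension $g$, and let $T\subseteq 2^{<\mathbb{N}}$ be any computable infinite binary tree. I would construct an $X$-computable path by recursion, maintaining the invariant that the current node $\sigma$ is \emph{extendible}, i.e.\ has infinitely many extensions in $T$; this starts with the empty string since $T$ is infinite. Given extendible $\sigma$, at least one of the two subtrees of nodes of $T$ extending $\sigma 0$ or $\sigma 1$ is infinite, and the statement ``the nodes extending $\sigma i$ form a finite set'' is $\Sigma^0_1$. Using the s-m-n theorem I would compute from $\sigma$ an index $e_\sigma$ such that $\Phi_{e_\sigma}(m)$, for every input $m$, runs the parallel search that outputs $1$ if it first finds the $\sigma 0$-subtree finite and $0$ if it first finds the $\sigma 1$-subtree finite. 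Then $\Phi_{e_\sigma}(e_\sigma)\!\downarrow\,\in\{0,1\}$ precisely when exactly one subtree is finite, so setting the next bit to $g(e_\sigma)$ always moves to an extendible node: if the $\sigma 0$-subtree is finite then $\Phi_{e_\sigma}(e_\sigma)=1$ forces $g(e_\sigma)=1$ and we pass to $\sigma 1$ (infinite); symmetrically for $\sigma 1$; and if both subtrees are infinite then $g(e_\sigma)$ may be either value but both children are extendible. Iterating yields an $X$-computable infinite path, so $X$ has PA degree.

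I expect the main obstacle to be the reverse direction: getting the index map $\sigma\mapsto e_\sigma$ uniformly computable via s-m-n so that $\Phi_{e_\sigma}(e_\sigma)$ carries out the finiteness search, and then checking the three-case analysis that $g(e_\sigma)$ preserves extendibility. The forward direction is routine once one verifies that $T$ is downward closed and infinite.
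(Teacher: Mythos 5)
Your argument is correct; it is the standard proof of this equivalence. Be aware, though, that the paper itself states this proposition purely as background in Section 2.2 (one of the classical characterizations of PA degrees, for which it defers to the computability literature) and gives no proof of its own, so there is no paper argument to compare against — your write-up simply supplies the proof the paper is implicitly relying on. Both directions check out: the tree of strings $\sigma$ consistent with every computation $\Phi_{e,|\sigma|}(e)$ that has halted with value in $\{0,1\}$ is computable, downward closed, and infinite, and every path through it is a total $\{0,1\}$-valued extension; conversely, the s-m-n construction of the index $e_\sigma$ running the parallel finiteness search, combined with the extendibility invariant, turns any $X$-computable $\{0,1\}$-valued extension into an $X$-computable path through a given computable infinite binary tree. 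One phrase worth polishing: ``$\Phi_{e_\sigma}(e_\sigma)\downarrow\in\{0,1\}$ precisely when exactly one subtree is finite'' is literally false in general, since the computation also halts when both subtrees are finite; that case is ruled out by your extendibility invariant, so the proof is unaffected, but the claim should be stated as holding under that invariant.
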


Here and throughout the paper, $\Phi_{e}$ is the $e$th partial computable function. 

For Theorem 3.2 and Theorem 3.3, we use a result by Jockusch \cite{cite13} about the $\text{DNC}_{k}$ degrees. 
We include the proof since we need to explore the reverse mathematics of this statement for some of our theorems in Chapter 4.

\begin{defn}
A function $f:\mathbb{N}\rightarrow\mathbb{N}$ is called \textit{diagonally non-computable} 
(DNC) if for all $e$, $f(e)\neq\Phi_{e}(e)$, where $\Phi_{e}$ is
the $e$th partial computable function. By $\text{DNC}_{k}$ we denote
the class of all $k$-bounded DNC functions, i.e. all $f\in\text{DNC}$
such that $f(e)<k$ for all $e$.
\end{defn}
\begin{manualthm}{}[Jockusch]
Let $k\geq2$. The degrees of functions in $\text{DNC}_{k}$
coincide with the PA degrees. 
\end{manualthm}

\begin{proof}
It is well-known that the PA degrees coincide with the $\textup{DNC}_2$ degrees, so we will show that the $\textup{DNC}_2$ degrees coincide with the $\textup{DNC}_k$ degrees for every $k\in\mathbb{Z}, k>2$. 
One direction is clear since every $\textup{DNC}_2$ function is a $\textup{DNC}_k$ function, for $k>2$. For the other direction, it is enough to show that every $\textup{DNC}_{k^2}$ function computes a $\textup{DNC}_k$ function. Let $g\in\textup{DNC}_{k^2}$ and let $\langle \ ,\ \rangle:\mathbb{N}\times \mathbb{N} \rightarrow \mathbb{N}$ be any pairing function that takes $k\times k$ to $k^2$. For $a,b<k$ compute $\Phi_a(a), \Phi_b(b)$ and $\langle \Phi_a(a),\Phi_b(b) \rangle$. Find a $c$ such that $\Phi_c(c)=\langle \Phi_a(a),\Phi_b(b) \rangle$. For this $c$, find functions $g_1$ and $g_2$ such that $g(c)=\langle g_1(a,b), g_2(a,b) \rangle$. Since $g(c)\neq \Phi_c(c)$, either $g_1(a,b) \neq \Phi_a(a)$ or $g_2(a,b) \neq \Phi_b(b)$. There are two possible cases:
\begin{description}
\item[Case 1] For every $a$, there is a $b$ such that $g_2(a,b)=\Phi_b(b)$. Given $a$, find such a $b$ and define $h(a)=g_1(a,b)$.

\item[Case 2] There is an $a$ such that for all $b$ we have $g_2(a,b)\neq \Phi_b(b)$. Fix such an $a$ and define $h(b)=g_2(a,b)$.

\end{description}
\noindent The function $h$ defined in this way can be computed from $g$ and $h\in \textup{DNC}_k$.
\end{proof}

 In reverse mathematics, we gauge the strength of theorems over a base system of axioms in second-order arithmetic. The usual axiom system
is $\textup{RCA}_{0}$, which roughly corresponds to computable
mathematics. A simple result that does not hold computably but is at the combinatorial core of many theorems is Weak K{\"o}nig's Lemma (WKL).

\begin{lem}[Weak K{\"o}nig's Lemma]

Every infinite binary tree has an infinite path.

\end{lem}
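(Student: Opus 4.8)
The plan is to construct an infinite path by a greedy descent, maintaining at each stage the invariant that we have committed only to nodes lying below infinitely much of the tree. Writing $\sigma \preceq \tau$ for the initial-segment (prefix) relation, for a string $\sigma \in 2^{<\mathbb{N}}$ let $T_\sigma = \{\tau \in T : \sigma \preceq \tau\}$ be the set of nodes of $T$ extending $\sigma$, and for $i \in \{0,1\}$ let $\sigma i$ denote the string obtained by appending $i$ to $\sigma$. I would build a nested sequence of strings $\sigma_0 \preceq \sigma_1 \preceq \cdots$ with $\sigma_n \in T$, $|\sigma_n| = n$, and $T_{\sigma_n}$ infinite for every $n$, and then take the path to be $f = \bigcup_n \sigma_n$ (viewing each $\sigma_n$ as a finite partial function into $\{0,1\}$).

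For the base case I would set $\sigma_0 = \langle\rangle$, the root, so that $T_{\sigma_0} = T$ is infinite by hypothesis. For the inductive step, supposing $\sigma_n \in T$ has been chosen with $T_{\sigma_n}$ infinite, note that every node of $T$ properly extending $\sigma_n$ extends exactly one of $\sigma_n 0$ or $\sigma_n 1$, so that
\[
T_{\sigma_n} = \{\sigma_n\} \cup T_{\sigma_n 0} \cup T_{\sigma_n 1}.
\]
Since the left-hand side is infinite while $\{\sigma_n\}$ is finite, at least one of $T_{\sigma_n 0}, T_{\sigma_n 1}$ must be infinite; I would let $\sigma_{n+1} = \sigma_n i$ for the least such $i$. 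Because $T_{\sigma_{n+1}}$ is infinite it is in particular nonempty, so some $\tau \in T$ extends $\sigma_{n+1}$, and closure of $T$ under initial segments gives $\sigma_{n+1} \in T$, which preserves the invariant. The resulting $f$ then satisfies $f \restriction n = \sigma_n \in T$ for every $n$, so $f$ is an infinite path through $T$.

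The only real mathematical content is the pigeonhole step: a binary tree is finitely branching, so an infinite subtree cannot split into a root together with two finite subtrees. I expect the genuine obstacle to lie not in this combinatorics but in the \emph{effectiveness} of the choice, which is exactly the point motivating the computability-theoretic and reverse-mathematical analysis in this paper. Deciding which of $T_{\sigma_n 0}, T_{\sigma_n 1}$ is infinite is a $\Pi^0_1$ question relative to $T$ and need not be answerable computably, so the path $f$ produced above is in general not computable from $T$; this noneffectiveness is precisely why Weak K\"onig's Lemma is not provable in $\mathrm{RCA}_0$ and why the colorings produced in Theorems 3.1--3.3 turn out to carry PA degree.
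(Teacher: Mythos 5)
Your proof is correct: it is the standard pigeonhole argument for K\"onig's Lemma restricted to binary trees, and every step (the invariant, the decomposition $T_{\sigma_n} = \{\sigma_n\} \cup T_{\sigma_n 0} \cup T_{\sigma_n 1}$, and the use of downward closure to conclude $\sigma_{n+1} \in T$) is sound. The paper itself states Weak K\"onig's Lemma as background without any proof, so there is nothing to compare against; your closing remark that the choice of infinite subtree is a $\Pi^0_1$ question relative to $T$, hence non-effective, is exactly the point the paper relies on when it connects WKL to PA degrees and to the non-computability results of Section 3.
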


This result is so significant in reverse mathematics that the next usual base system of axioms is $\textup{WKL}_{0}$, which consists of  $\textup{RCA}_{0}$ together with Weak K{\"o}nig's Lemma. From the statement, you can clearly see that there is a close connection between PA degrees and $\textup{WKL}_{0}$, which will be also demonstrated in our theorems.

A fact that we are going to use in Theorem 4.1 is the fact that $\textup{WKL}_0$ is equivalent to the $\Sigma_{1}^{0}$-\textit{separation principle}.

\begin{prop}[$\Sigma_{1}^{0}$-separation
principle]

Let $\varphi_0(n)$ and $\varphi_1(n)$ be $\Sigma_{1}^{0}$ formulas in which $X$ does not occur free. If $\lnot\exists n(\varphi_{0}(n)\wedge\varphi_{1}(n))$, then $$\exists X[(\varphi_0(n) \rightarrow n\notin X) \land (\varphi_1(n) \rightarrow n \in X)].$$
\end{prop}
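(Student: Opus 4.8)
The plan is to prove the equivalence of this principle with $\textup{WKL}_0$ over $\textup{RCA}_0$; this is the standard characterization (see \cite{cite21}), and since the application in Theorem 4.1 only invokes the implication from $\textup{WKL}_0$ to separation, I would give that direction in full and treat the converse as routine. First I would put the hypotheses in normal form: write $\varphi_0(n)\equiv\exists s\,\theta_0(n,s)$ and $\varphi_1(n)\equiv\exists s\,\theta_1(n,s)$ with $\theta_0,\theta_1$ being $\Sigma^0_0$ and $X$ not occurring in them. The hypothesis $\lnot\exists n(\varphi_0(n)\wedge\varphi_1(n))$ then says that the $\Sigma^0_1$ sets $A_0=\{n:\varphi_0(n)\}$ and $A_1=\{n:\varphi_1(n)\}$ are disjoint, and the goal is a set $X$ with $A_1\subseteq X$ and $A_0\cap X=\emptyset$, i.e.\ one satisfying $(\varphi_0(n)\rightarrow n\notin X)\wedge(\varphi_1(n)\rightarrow n\in X)$ for all $n$.

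For the direction $\textup{WKL}_0\Rightarrow$ separation I would build a binary tree whose paths are exactly the separating sets consistent with the enumerations of $A_0$ and $A_1$. Declare $\sigma\in 2^{<\mathbb{N}}$ to lie in $T$ iff for all $n<|\sigma|$ and all $s<|\sigma|$ we have $\theta_0(n,s)\rightarrow\sigma(n)=0$ and $\theta_1(n,s)\rightarrow\sigma(n)=1$. The defining condition uses only length-bounded quantifiers, so $T$ is $\Delta^0_0$-definable and exists in $\textup{RCA}_0$; and because the condition on $\sigma$ refers only to coordinates below $|\sigma|$, it persists under passing to initial segments, so $T$ is genuinely a tree. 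To see $T$ is infinite, for each $\ell$ I would exhibit the string $\sigma_\ell$ of length $\ell$ with $\sigma_\ell(n)=1$ exactly when $\exists s<\ell\,\theta_1(n,s)$: if some $\theta_0(n,s)$ holds with $n,s<\ell$ then $n\in A_0$, so by disjointness $n\notin A_1$ and no $\theta_1(n,s')$ holds, forcing $\sigma_\ell(n)=0$ consistently. Thus $\sigma_\ell\in T$ for every $\ell$, witnessing a node at each level. Applying Weak K{\"o}nig's Lemma yields a path $X$, and if $\varphi_1(n)$ holds then some $\theta_1(n,s)$ holds, so for every initial segment $X\restriction\ell$ with $\ell>\max(n,s)$ the tree condition forces $X(n)=1$, giving $n\in X$; symmetrically $\varphi_0(n)$ forces $n\notin X$, so $X$ separates $A_0$ and $A_1$.

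For the converse, separation $\Rightarrow\textup{WKL}_0$, I would start from an infinite tree $T\subseteq 2^{<\mathbb{N}}$ and observe that non-extendibility of a node is a $\Sigma^0_1$ property, since for fixed $\ell$ the assertion that no length-$\ell$ extension of a node lies in $T$ is bounded; applying separation to a suitable disjoint pair of $\Sigma^0_1$ sets of nodes built from this predicate yields a set from which one decodes an infinite path, the verification using the infinitude of $T$ together with $\Sigma^0_0$ induction. I would only sketch this direction and cite \cite{cite21}, as it is not used below.

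The step demanding the most care is the forward construction. The subtlety is entirely formal rather than combinatorial: one must present the tree with length-bounded quantifiers so that it provably exists in $\textup{RCA}_0$, confirm that it is genuinely downward closed, and establish that it is infinite using \emph{only} the disjointness hypothesis and bounded induction (the string $\sigma_\ell$ must itself be produced by $\Delta^0_0$ comprehension). Once the tree is set up correctly, both its infinitude and the separating property of any path follow immediately, so the whole argument hinges on getting this $\Delta^0_0$ bookkeeping right.
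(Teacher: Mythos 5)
What you prove, you prove correctly: the tree of strings consistent with the bounded matrices $\theta_0,\theta_1$, the explicit level-$\ell$ witness $\sigma_\ell$, and the verification that any path through the tree separates the two $\Sigma^0_1$ sets is exactly the standard argument for this direction (it is the proof in Simpson \cite{cite21}). For comparison purposes, however, note that the paper proves neither this proposition nor the following one asserting its equivalence with $\textup{WKL}_0$: both are stated in Section 2.2 as background facts, with \cite{cite21} cited at the head of the section. So the only substantive question is whether your write-up supplies the implication the paper actually relies on, and there it does not.

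Your stated reason for detailing $\textup{WKL}_0\Rightarrow\text{separation}$ and sketching the converse --- that ``the application in Theorem 4.1 only invokes the implication from $\textup{WKL}_0$ to separation'' --- is a misreading, in fact the exact reverse of how the paper uses this principle. In the forward direction of Theorem 4.1 the paper never passes through separation: it applies Weak K{\"o}nig's Lemma directly to the tree of $2$-colorings of $G$ with no monochromatic solutions to $x-y=b$. Separation enters only in the reversal: assuming statement (ii), the paper reruns the construction of Theorem 3.1, acting on the witnesses $x_e,y_e$ when $\varphi_0(e)$ or $\varphi_1(e)$ is observed to hold, and concludes that $X=\{e : c(x_e)=c(y_e)\}$ is a separator; deducing $\textup{WKL}_0$ from this requires the implication $\text{separation}\Rightarrow\textup{WKL}_0$, which is precisely the direction you relegate to a citation plus a vague sketch. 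If your proof is to serve the paper, that converse is the direction that must be carried out in full: given an infinite binary tree $T$, apply separation to the disjoint $\Sigma^0_1$ sets of nodes $\sigma$ for which the subtree above the left (respectively, right) successor of $\sigma$ dies out at a level where the subtree above the other successor still has nodes, and then show by induction that following the separating set from the root produces an infinite path. The direction you prove in detail is the one the paper could afford to leave as routine.
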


\begin{prop}
Over $\textup{RCA}_0$, $\textup{WKL}_0$ is equivalent to the $\Sigma_{1}^{0}$-separation principle.
\end{prop}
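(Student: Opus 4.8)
The plan is to prove the two implications separately, in each case translating between paths through binary trees and separating sets. Throughout I write a $\Sigma^0_1$ formula $\varphi_i(n)$ as $\exists m\,\theta_i(n,m)$ with $\theta_i$ bounded, so that every tree and set I build is defined by a bounded (hence $\Delta^0_1$) predicate and therefore exists in $\textup{RCA}_0$.

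For the direction $\textup{WKL}_0 \Rightarrow \Sigma^0_1$-separation, assume $\varphi_0,\varphi_1$ are disjoint. I would build the ``consistency tree'' $T\subseteq 2^{<\mathbb{N}}$ consisting of those $\sigma$ such that for every $n<\mathrm{lh}(\sigma)$ and every witness $m<\mathrm{lh}(\sigma)$, $\theta_0(n,m)$ forces $\sigma(n)=0$ and $\theta_1(n,m)$ forces $\sigma(n)=1$. This predicate is bounded and downward closed, so $T$ exists and is a tree, and it is infinite: at each length $\ell$ the string that colors $n$ with $1$ exactly when a witness for $\varphi_1(n)$ has appeared below $\ell$ lies in $T$, the disjointness hypothesis guaranteeing the two clauses never conflict. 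Applying $\textup{WKL}_0$ yields a path $P$, and I would set $X=\{n:P(n)=1\}$; a witness for $\varphi_0(n)$ (resp.\ $\varphi_1(n)$) appearing below some length forces $P(n)=0$ (resp.\ $P(n)=1$) on every sufficiently long initial segment of $P$, which is exactly the separation requirement.

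The converse, $\Sigma^0_1$-separation $\Rightarrow \textup{WKL}_0$, is the more delicate direction and the one I expect to be the main obstacle. Given an infinite $T\subseteq 2^{<\mathbb{N}}$, the difficulty is that the ``correct'' path is governed by extendibility, a $\Pi^0_1$ rather than $\Sigma^0_1$ property, so I cannot separate on extendibility directly. The key trick is to compare, at a node $\sigma$, how long each child survives: let $\varphi_0(\sigma)$ say that at some level the left child $\sigma\frown 0$ has no extension of that length in $T$ while the right child $\sigma\frown 1$ still does, and let $\varphi_1(\sigma)$ say the symmetric statement with the roles reversed. Since having no extension of a given length is upward closed in the level, $\varphi_0(\sigma)$ amounts to the left child dying strictly before the right and $\varphi_1(\sigma)$ to the reverse, so the two are genuinely disjoint --- this is the crucial point that makes the separation principle applicable. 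I would then feed $\varphi_0,\varphi_1$ into separation to obtain $X$ and define a path by primitive recursion, $P(n)=1-X(P\restriction n)$, so that at each node $P$ moves toward whichever child the separator endorses.

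Finally I would verify that $P$ is genuinely a path, the content being the preservation step: if $P\restriction n$ is extendible then so is $P\restriction(n+1)$. When both children are extendible neither $\varphi_i$ fires and either choice is safe; when exactly one child is extendible the corresponding $\varphi_i$ holds, and the defining property of $X$ forces $P$ to step into the surviving child. Since $T$ is infinite its root is extendible, so by induction every $P\restriction n$ is extendible and hence lies in $T$, making $P$ an infinite path. The one subtlety to flag is that ``extendible'' is $\Pi^0_1$, so this is $\Pi^0_1$ induction; it is available in $\textup{RCA}_0$ because $\mathsf{I}\Sigma^0_1$ proves $\mathsf{I}\Pi^0_1$. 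This completes the equivalence.
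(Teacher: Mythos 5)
Your proof is correct. Note that the paper never proves this proposition---it is quoted as a known background fact, with Simpson's \emph{Subsystems of Second Order Arithmetic} among the section's references---so there is no in-paper argument to compare against; what you have written is essentially the standard textbook proof from that source: the consistency tree of finite approximations to a separating set for the forward direction, the disjoint ``which child dies first'' $\Sigma^0_1$ formulas fed into separation for the converse, and $\Pi^0_1$ induction (available in $\textup{RCA}_0$ since $\Sigma^0_1$ induction implies it) to keep the recursively defined path extendible. Both directions check out, including the orientation of the separator ($\varphi_0(\sigma)\rightarrow\sigma\notin X$ steering $P$ into the surviving child), which is the easiest place to slip.
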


For Theorem 4.3 and 4.4 we will need propositions about the reverse mathematics of $\textup{DNC}_k$ functions. It is well known that over $\textup{RCA}_0$, $\textup{WKL}_0$ is equivalent to the existence of $\textup{DNC}_2$ functions and the existence of PA degrees, so the following proposition shows that $\textup{WKL}_0$ is equivalent to the existence of $\textup{DNC}_k$ functions for a fixed $k\in\mathbb{N}, k\geq 2$, over $\textup{RCA}_0$.

\begin{lem} Jockusch's theorem is provable in $\textup{RCA}_0$.
\end{lem}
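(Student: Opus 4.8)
The plan is to verify that the proof of Jockusch's theorem given above in the excerpt goes through in $\textup{RCA}_0$, taking care that every construction and every case analysis is permissible in the base system. The statement to be formalized is: for every $k \geq 2$, the $\textup{DNC}_{k}$ degrees coincide with the $\textup{DNC}_2$ degrees, or more precisely the iterated claim that every $\textup{DNC}_{k^2}$ function computes a $\textup{DNC}_k$ function. Since $\textup{RCA}_0$ proves the existence of a pairing function $\langle\ ,\ \rangle$ with the required behavior on $k \times k \to k^2$, and since $\textup{RCA}_0$ has access to the universal partial computable function, the opening moves of the proof are unproblematic. The real content is to re-examine the case split and confirm that the function $h$ is definable by $\Delta^0_1$ comprehension relative to $g$.

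First I would set up, inside $\textup{RCA}_0$, the object $g \in \textup{DNC}_{k^2}$ as a given function, fix the pairing function, and reconstruct the maps $g_1, g_2$ from $g$ by composing with the computable unpairing; these are $g$-computable and hence exist by $\Delta^0_1$ comprehension relative to $g$. The key combinatorial fact, that for the relevant $c$ we have $g_1(a,b) \neq \Phi_a(a)$ or $g_2(a,b) \neq \Phi_b(b)$, is a $\Sigma^0_1$ consequence of $g(c) \neq \Phi_c(c)$ and goes through verbatim. The delicate point is the dichotomy between Case 1 and Case 2. The statement ``for every $a$ there is a $b$ with $g_2(a,b) = \Phi_b(b)$'' is a $\Pi^0_2$ sentence, and its negation is Case 2; so the case split itself is an instance of the law of the excluded middle applied to a $\Pi^0_2$ formula, which is unrestricted and therefore available in $\textup{RCA}_0$. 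What must be checked is that in each case $h$ is actually a well-defined total function given by $\Delta^0_1$ comprehension relative to $g$.

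In Case 1, I would define $h(a)$ by searching for the least $b$ with $g_2(a,b) = \Phi_b(b)$ and setting $h(a) = g_1(a,b)$; totality of this search is exactly the Case 1 hypothesis, and the graph of $h$ is $g$-computable since $\Phi_b(b) = g_2(a,b)$ is a decidable-in-the-limit condition witnessed by a halting computation, so $\Sigma^0_1$ and, for the least witness, $\Delta^0_1$ relative to $g$. In Case 2, I would fix the witnessing $a$ (its existence is guaranteed by the case hypothesis, and $\textup{RCA}_0$ permits fixing a parameter whose existence is asserted) and set $h(b) = g_2(a,b)$, which is outright $g$-computable. In both cases one then argues $h \in \textup{DNC}_k$: $h(a) < k$ follows from the range of $g_1, g_2$ under the pairing, and $h(a) \neq \Phi_a(a)$ (respectively $h(b) \neq \Phi_b(b)$) follows from the defining inequalities. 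These verifications are $\Sigma^0_1$ or arithmetical with bounded quantifiers and require no induction beyond $\Sigma^0_1$ induction, which is part of $\textup{RCA}_0$.

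The step I expect to be the main obstacle is confirming that no appeal to a stronger comprehension or induction scheme sneaks in when we assert that $h$ is a set (not merely a formula). The search in Case 1 must be shown to terminate uniformly so that $h$ is total and its graph satisfies a $\Delta^0_1$ definition relative to $g$; here one leans on the Case 1 hypothesis together with the fact that a successful computation $\Phi_b(b)\!\downarrow\, = g_2(a,b)$ is a finitary witness, so minimization over $b$ yields a $\Delta^0_1(g)$ graph and $\textup{RCA}_0$'s $\Delta^0_1$ comprehension supplies the set. The only subtlety worth flagging explicitly is that the choice between the two cases is non-effective, but this is harmless reverse-mathematically: $\textup{RCA}_0$ proves the disjunction and, in either disjunct, produces the desired $h$, which is all that is needed. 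I would close by remarking that, combined with the known equivalence of $\textup{WKL}_0$ with the existence of $\textup{DNC}_2$ functions over $\textup{RCA}_0$, this yields the equivalence of $\textup{WKL}_0$ with the existence of $\textup{DNC}_k$ functions for each fixed $k \geq 2$.
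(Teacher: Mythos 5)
Your proposal is correct and takes essentially the same approach as the paper: the paper's entire proof of this lemma is the one-line assertion that the proof of Jockusch's theorem given earlier can be carried out in $\textup{RCA}_0$, and your write-up supplies exactly that verification (the pairing apparatus, the $\Pi^0_2$-versus-$\Sigma^0_2$ case split by classical logic, and the $\Delta^0_1(g)$ definition of $h$ in each case). Note only that, like the paper, you treat $k$ as fixed, which is precisely the reading the paper relies on later (``the existence of a $\textup{DNC}_k$ function for a fixed $k$''); handling $k$ as an internally quantified first-order variable is what forces the tree-plus-$\Pi^0_1$-induction argument that the paper defers to Lemma 2.3.
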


\begin{proof}
It is not too hard to see that the proof of Jockusch's theorem above can be carried out in $\textup{RCA}_0$.
\end{proof}

\begin{lem}
Over $\textup{RCA}_0$, $\textup{WKL}_0$ is equivalent to the statement $(\exists k)\textup{DNC}_k$.
\end{lem}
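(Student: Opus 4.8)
The plan is to prove the two implications separately, leaning on the two facts recorded just above: that over $\textup{RCA}_0$ the principle $\textup{WKL}_0$ is equivalent to the existence of a $\textup{DNC}_2$ function, and that Jockusch's theorem is provable in $\textup{RCA}_0$.

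For the forward direction I would assume $\textup{WKL}_0$. Then a $\textup{DNC}_2$ function exists, and since every $\textup{DNC}_2$ function is in particular a $\textup{DNC}_k$ function with $k=2$, the statement $(\exists k)\textup{DNC}_k$ holds, witnessed by $k=2$. This direction is immediate and requires nothing beyond the cited equivalence.

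For the reverse direction I would assume $(\exists k)\textup{DNC}_k$ and fix a witness, namely a number $k\geq 2$ together with a function $f\in\textup{DNC}_k$. By the version of Jockusch's theorem proved in $\textup{RCA}_0$ above, $f$ computes a $\textup{DNC}_2$ function $g$; applying the cited equivalence to $g$ then yields $\textup{WKL}_0$. Concretely, one iterates the $\textup{DNC}_{k^2}\to\textup{DNC}_k$ reduction from the proof of Jockusch's theorem, passing from $f$ through functions whose bound is replaced by $\lceil\sqrt{\,\cdot\,}\rceil$ at each stage, until the bound reaches $2$.

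The delicate point, and the one I expect to be the main obstacle, is that the quantifier $(\exists k)$ ranges over the (possibly nonstandard) numbers of the model, so the witness $k$ need not be standard. One therefore cannot simply invoke the per-fixed-$k$ equivalence $\textup{WKL}_0 \leftrightarrow \textup{DNC}_k$ finitely many times; instead the reduction from a $\textup{DNC}_k$ function to a $\textup{DNC}_2$ function must be performed once, uniformly in $k$. I would set this up as a primitive recursion defining the sequence of reduced functions indexed by the iteration stage $i\leq t$, where $t$ is the least stage whose bound equals $2$, and verify the invariant that the $i$th function lies in $\textup{DNC}_{k_i}$ by induction on $i$. Since this invariant is $\Pi^0_1$ and the sequence is obtained by iterating a fixed Turing functional, both the construction and its verification go through with the $\Sigma^0_1$-induction available in $\textup{RCA}_0$, which is precisely the content packaged into the preceding lemma.
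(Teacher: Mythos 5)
Your skeleton agrees with the paper's: the forward direction is the same triviality, and for the reverse direction both you and the paper iterate Jockusch's $\textup{DNC}_{k^2}\rightarrow\textup{DNC}_k$ reduction inside the model, precisely because the witness $k$ may be nonstandard; you also correctly see that the verification must be an induction on the iteration stage (the paper uses $\Pi^0_1$-induction, which $\textup{RCA}_0$ does prove).

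The gap is in the one claim your recursion rests on: that the sequence of reduced functions is ``obtained by iterating a fixed Turing functional.'' There is no such functional. The step in Jockusch's proof is genuinely non-uniform: it splits into Case 1 (``for every $a$ there is a $b$ with $g_2(a,b)=\Phi_b(b)$'') and Case 2 (its negation), a $\Pi^0_2$/$\Sigma^0_2$ distinction that cannot be decided from $g$ by any functional, and in Case 2 the reduced function moreover depends on a non-effectively chosen witness $a$. The paper itself records the resulting impossibility as $\textup{Jockusch}_W$ in Section 5.4: for $k\geq 2$ no Turing functional maps every $\textup{DNC}_{k+1}$ function to a $\textup{DNC}_k$ function; since every $\textup{DNC}_{k+1}$ function is in particular a $\textup{DNC}_{k^2}$ function, no fixed functional can implement your iteration step either. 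So the primitive recursion you describe cannot even be defined in $\textup{RCA}_0$, independently of how its invariant would be verified. The paper's proof repairs exactly this point: it pads $k$ up to a number of the form $2^{2^m}$, then builds a finite tree of candidate functions in which each branching records which case (and which witness) was used at that step, so that no case distinction ever has to be made effectively; $\Pi^0_1$-induction then shows that every level of this tree contains a function that is $\textup{DNC}_l$ for the appropriate bound $l$, and the last level supplies a $\textup{DNC}_2$ function, from which $\textup{WKL}_0$ follows via the fixed-$k$ equivalence. Your $\Pi^0_1$-invariant idea survives, but only as an existential statement over the finitely many candidates at each level, not as a property of a single canonically defined sequence.
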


\begin{proof}
One direction is clear. For the other direction, assume that there exists a $\textup{DNC}_k$ function $f$ for some $k\in\mathbb{N}$. There is a number of the form $2^{2^m}\geq k$ for some $m\in\mathbb{N}$. Note that if $f\in\textup{DNC}_k$, then $f\in\textup{DNC}_{2^{2^{m}}}$. Following the proof of Jockusch's theorem, we obtain a $\textup{DNC}_{2^{2^m-1}}$ function in one of two possible cases, even though we do not necessarily know which case gives us this function. 

Form a binary tree where the level $0$ node is the function $f$, and each branch represents one of the two cases in Jockusch's theorem. Repeating the process, this tree will have $2^m$ levels and the last level will give us a $\textup{DNC}_2$ function. Using $\Pi_0^1$-induction we can show that at every level of the tree, there is a $\textup{DNC}_l$ function for an appropriate $l\in \mathbb{N}$. Hence the lemma follows from Lemma 2.2, the fact that this proof can be carried out in $\textup{RCA}_0$ and the fact that the $\textup{DNC}_2$ degrees are equivalent to $\textup{WKL}_0$ over $\textup{RCA}_0$.
\end{proof}

\vspace{1cm}

\section{The Computability of Straus' theorem}

\vspace{5mm}

In this section, we analyze Straus' theorem from a computability-theoretic point of view. In Theorem 3.1 we show that it does not hold computably and we find that the PA degrees are the best possible bound for the case $n=1$. For the case $n\geq 2$, in Theorem 3.2 we show that the PA degrees are a lower bound for the same problem with one additional condition added. Theorem 3.3 is a generalized version of Theorem 3.2 that works for any finite coloring at the same time, regardless of the number of colors. 

In all of these theorems, we build a computable abelian group $G$ that is isomorphic to ${\mathbb{Z}}^{(\omega)}$, where by ${\mathbb{Z}}^{(\omega)}$ we denote the additive group of all infinite sequences of integers, where each sequence has at most finitely many nonzero coordinates. We will
represent the elements of ${\mathbb{Z}}^{(\omega)}$ as finite strings of integers ignoring the trailing
zeroes, and represent the zero sequence by $0$. For example, we will
write $10^{3}1$ for the infinite sequence $(1,0,0,0,1,0,0,0,0,....)$.

During each stage of these constructions, we will have finitely many elements $G_{s}$ of $G$ and possibly an injective mapping $f:G_{s} \rightarrow {\mathbb{Z}}^{(\omega)}$. We want to define the addition on $G_s$ according to the addition in ${\mathbb{Z}}^{(\omega)}$ using the mapping $f$. In other words, for all $m,n \in G_s$, we want to define $m+n=f^{-1}(f(m)+f(n))$. However, at different stages, we will have different mappings so it is crucial to preserve the addition on $G_s$ that we have previously defined. That means that if $h:G_{s} \rightarrow {\mathbb{Z}}^{(\omega)}$ is an injective mapping used at a later stage, we require $f^{-1}(f(m)+f(n)) = h^{-1}(h(m)+h(n))$. This is illustrated in the following diagram.

\begin{figure}[h]
\caption{A visual representation of Lemma 3.0.1}
\centering
\includegraphics[width=14cm,scale=0.7]{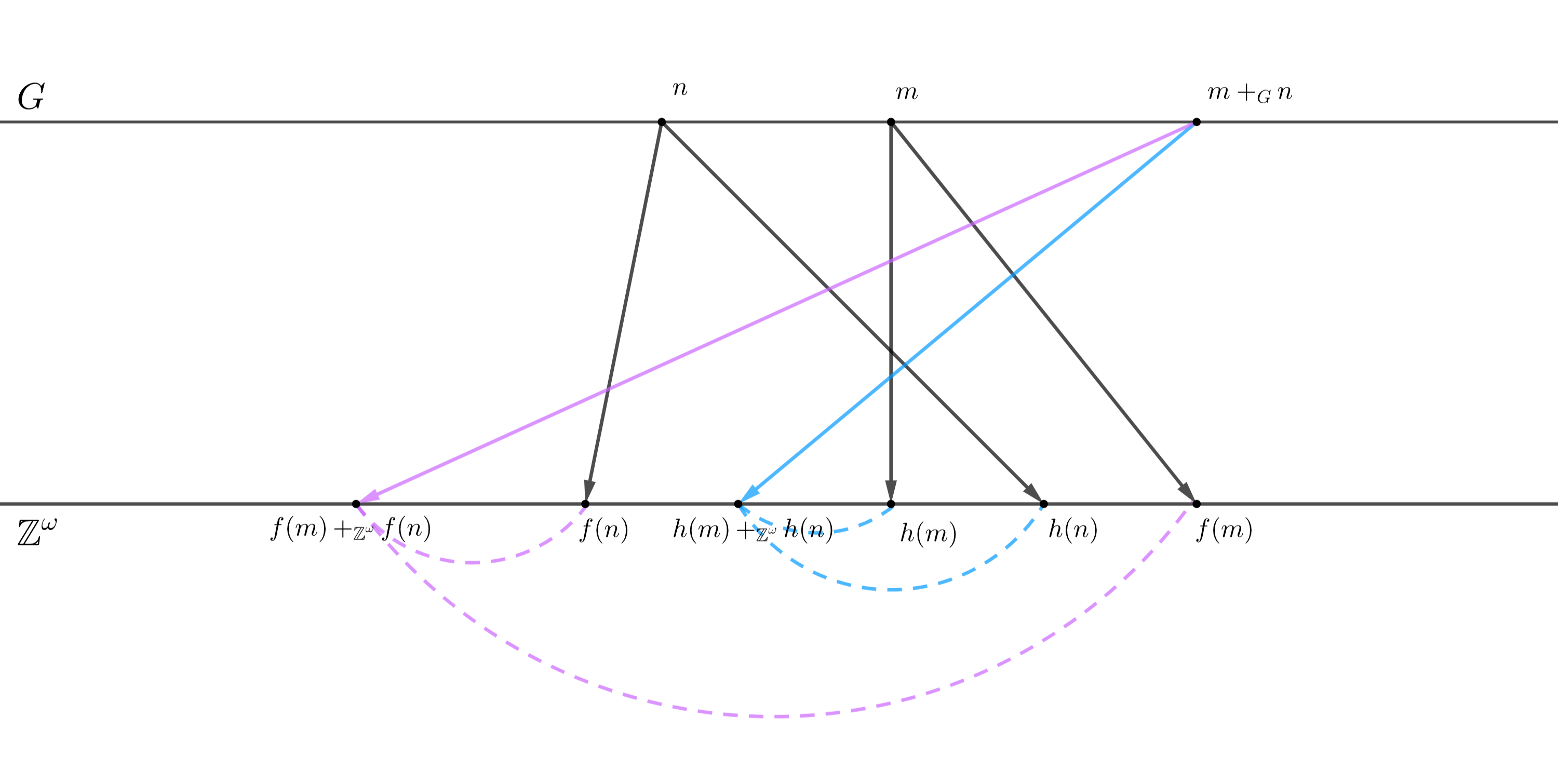}
\end{figure}

Choosing the mappings as in the following lemma is enough for this condition to be satisfied. Note that, since $(a_{1},a_{2},...)\in\mathbb{\mathbb{Z}}^{(\omega)}$,
it has finitely many nonzero coordinates, so any linear combination
of the $a_i$'s is going to be finite. 

\begin{lem}
Let $G$ be a nonempty set and
${f}:G\rightarrow\mathbb{\mathbb{Z}}^{(\omega)}$ an injective mapping. Let $f(m)=(a_1,a_2,...)$ and let $h:G\rightarrow\mathbb{\mathbb{Z}}^{(\omega)}$ be an injective mapping defined in the following way:
$$h(m)=(\underset{i}{\sum}k_{1i}a_{i},\underset{i}{\sum}k_{2i}a_{i},...,\underset{i}{\sum}k_{ji}a_{i},...)$$
for all $m\in G$, where the $k_{ji}\in\mathbb{Z}$ are fixed for all $i,j\in \mathbb{N}$. Then $h^{-1}(h(m)+h(n))=f^{-1}(f(m)+f(n))$
for all $m,n\in G$.
\end{lem}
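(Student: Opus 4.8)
The plan is to recognize that $h$ is just $f$ followed by a fixed additive operation on sequences, and then to exploit that additivity. Concretely, write $K$ for the operation on integer sequences that sends $(a_1,a_2,\dots)$ to $(\sum_i k_{1i}a_i,\sum_i k_{2i}a_i,\dots)$, determined by the fixed matrix $(k_{ji})$; then by definition $h = K\circ f$, and the whole content of the lemma is that $K$ commutes with the coordinatewise addition inherited from $\mathbb{Z}^{(\omega)}$.

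First I would verify that $K$ is additive. For finitely supported $u=(u_i)$ and $v=(v_i)$, the $j$-th coordinate of $K(u+v)$ is $\sum_i k_{ji}(u_i+v_i)$, and since $u$ and $v$ each have only finitely many nonzero coordinates, this is a finite sum and therefore splits as $\sum_i k_{ji}u_i+\sum_i k_{ji}v_i$, which is exactly the $j$-th coordinate of $K(u)+K(v)$. Hence $K(u+v)=K(u)+K(v)$ coordinatewise, for every pair of finitely supported sequences.

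Next I would run the preimage computation. Suppose $p=f^{-1}(f(m)+f(n))$, i.e. $f(p)=f(m)+f(n)$. Applying $K$ and using additivity gives $h(p)=K(f(p))=K(f(m)+f(n))=K(f(m))+K(f(n))=h(m)+h(n)$, so $h(m)+h(n)$ lies in the range of $h$; and because $h$ is injective, $h^{-1}(h(m)+h(n))=p=f^{-1}(f(m)+f(n))$, as required.

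The computation itself is routine linearity; the only points that need care are bookkeeping rather than substance, and that is where I would be most careful. The expression $f^{-1}(f(m)+f(n))$ is meaningful only when $f(m)+f(n)$ lies in the range of $f$, which is precisely the situation in the constructions (where $m+n$ had already been defined via $f$ at an earlier stage), and the argument above shows $h$ then assigns the same sum. I also rely on the standing hypotheses that $h$ genuinely maps into $\mathbb{Z}^{(\omega)}$ and is injective: injectivity is what makes the preimage unique, while the fact that $K$ need not send an arbitrary finitely supported sequence to a finitely supported one never intervenes, since I only ever evaluate $K$ on $f(p)$, whose image $h(p)$ is assumed to lie in $\mathbb{Z}^{(\omega)}$.
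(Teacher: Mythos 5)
Your proof is correct and follows essentially the same route as the paper's: both establish that $h$ respects the addition induced by $f$ (i.e.\ $h(f^{-1}(f(m)+f(n)))=h(m)+h(n)$, which is just linearity of the coordinate transformation) and then invoke injectivity of $h$ to identify the preimage. The only difference is cosmetic: the paper verifies the computation explicitly for the special case where $h$ collapses everything into a single coordinate and asserts the general case is similar, whereas your operator $K$ handles the full matrix $(k_{ji})$ directly, with slightly more careful bookkeeping about finite support and well-definedness of $f^{-1}$.
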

\begin{proof}
We will
show that the 
 holds for $h$  defined in the following way $h(m)=h(f^{-1}(a_{1},a_{2},...))=(\underset{i}{\sum}k_{i}a_{i},0,0,0,...)$
for all $m\in G$, where the $k_{i}\in\mathbb{Z}$ are fixed. The general claim will
follow in a similar way. Let $m,n\in G$ and $f(m)=(a_{1},a_{2},...)$,
$f(n)=(b_{1},b_{2},...)$. We have

\begin{equation*}
\begin{split} 
h(m+n) & =h(f^{-1}(f(m)+f(n)))\\
&=h(f^{-1}(a_{1}+b_{1},a_{2}+b_{2},...))\\
&=(\underset{i}{\sum}k_{i}(a_{i}+b_{i}),0,0,0,...)\\
&=(\underset{i}{\sum}k_{i}a_{i},0,0,0,...)+(\underset{i}{\sum}k_{i}b_{i},0,0,0,...)\\
&=h(m)+h(n).
\end{split}
\end{equation*}

\noindent Since $h$ is injective, the claim follows.
\end{proof}

We arrive at the first theorem of this paper. The proof of this theorem follows a simple idea: if we have a $2$-coloring of a group $G$ with no monochromatic solutions to $x-y=b$, then any two $x_1,y_1$ such that $|x_1-y_1|=b$ have to be colored in a different color (Figure 2). It follows that, if $|x_1-y_1|=kb$ for some even integer $k$, then $x_1$ and $y_1$ have to be colored in the same color. Similarly, if $|x_1-y_1|=kb$ for some odd integer $k$, then $x_1$ and $y_1$ have to be colored in a different color. Following this idea, we will ensure that whenever the $e$th p.c. function halts, we have some $x_e$ and $y_e$ that are $kb$ apart for some integer $k$. This is reflected in the requirements $R_e$ below.

\begin{figure}[h]
\centering
\caption{The behavior of a $2$-coloring with no monochromatic solutions to the equation $x-y=b$}
\includegraphics[width=14cm,scale=0.7]{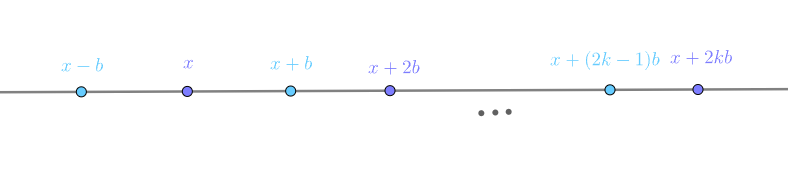}
\end{figure}

\begin{manualthm}{3.1}

There is a computable abelian group $(G,+,0)$ and an element $b\in G,b\neq0$
such that, if $c$ is a 2-coloring of $G$ for which the equation 
\[
x-y=b
\]
 has no monochromatic solutions, then $c$ has PA degree.
\end{manualthm}
\begin{proof}
We are going to build such a group $G$ that will be isomorphic to $\mathbb{\mathbb{Z}}^{(\omega)}$
in stages. At each stage $s$, we will build a finite set $G_{s}$
and an injective map $h_{s+1}:G_{s+1}\rightarrow\mathbb{Z}^{(\omega)}$
such that $h_{s}^{-1}(h_{s}(i)+h_{s}(j))=h_{s+1}^{-1}(h_{s+1}(i)+h_{s+1}(j))$
for all $i,j\in G_{s}$. Then $G=\bigcup G_{s}$ and we will show $\underset{s\rightarrow\infty}{\lim}h_{s}(x)$
exists so $h(x)=\underset{s\rightarrow\infty}{\lim}h_{s}(x)$ is going
to be an injective map from $G$ to $\mathbb{\mathbb{Z}}^{(\omega)}$.

While we are building $G$ and $h$, we want to satisfy certain requirements,
namely, ensuring that $G$ is a computable group (we will denote these
requirements by $A_{e}$ for addition and $I_{e}$ for inverses)
and that this group satisfies the conditions of the theorem (we will
denote these requirements by $R_{e}$). Before the construction, we
fix an element $b\neq0$ that will be mapped to $1\in\mathbb{\mathbb{Z}}^{(\omega)}$
during the construction. Also, for every $e$, we fix $2$ witnesses
for $R_{e}$, denoted by $x_{e}$ and $y_{e}$. At the first stage
$s$ where we define $h_{s}(x_{e})$ and $h_{s}(y_{e})$, we define
them by $0^{2e+1}1$ and $0^{2e+2}1$, respectively, and this will
change only when we redefine $h$ for the purposes of $R_{e}$. Let $\{\Phi_{e}\}_{e \in \mathbb{N}}$ be the standard enumeration of the $2$-valued p.c. functions.
The requirements that we want to satisfy for all $e\geq0$ are the
following:

\vspace{3mm}

 \makebox[1.5cm]{$A_{e}$:}  If $e=\langle i,j\rangle$, there is an $l$ such that $h(l)=h(i)+h(j)$

\vspace{2mm}

 \makebox[1.5cm]{$I_{e}$:}  There is an element $-e$ such that $h(e)+h(-e)=0$

\vspace{2mm}

 \makebox[1.5cm]{$R_{e}$:}  If $\Phi_{e}(e)\downarrow=0$ then $h(y_{e})=h(x_{e})+kh(b)$
for some even integer $k$ and if $\Phi_{e}(e)\downarrow=1$ then $h(y_{e})=h(x_{e})+kh(b)$ for some odd integer $k$.

\vspace{3mm}

\noindent Construction:

Stage $0$: Let $G_{0}=\{0,b\}$, $h_{0}(0)=0$ and $h_{0}(b)=1$.

Stage $s+1$:
\begin{itemize}
\item If $s=\langle e,3t+1\rangle$ for some $e,t\in\mathbb{N}$ and $e=\langle i,j\rangle$
for some $i,j\in\mathbb{N}$, check whether there is an $l$ such
that $h_{s}(l)=h_{s}(i)+h_{s}(j)$. 
\begin{itemize}
\item If there is such an $l$, let $G_{s+1}=G_{s}$ and $h_{s+1}(g)=h_{s}(g)$
for all $g\in G_{s}$. 
\item If there is no such $l$, let $l$ be the least natural number that
has not been considered so far in the construction and set $G_{s+1}=G_{s}\cup\{l\}$,
$h_{s+1}(l)=h_{s}(i)+h_{s}(j)$, and $h_{s+1}(g)=h_{s}(g)$ for all
$g\in G_{s}$.
\end{itemize}
\item If $s=\langle e,3t+2\rangle$ for some $e,t\in\mathbb{N}$, check
whether there is a $-e$ such that $h_{s}(e)+h_{s}(-e)=h_{s}(0)$. 
\begin{itemize}
\item If yes, let $G_{s+1}=G_{s}$ and $h_{s+1}(g)=h_{s}(g)$ for all $g\in G_{s}$. 
\item If there is no such $-e$, let $-e$ be the least natural number that
has not been considered so far in the construction and set $G_{s+1}=G_{s}\cup\{-e\}$,
$h_{s+1}(-e)=-h_{s}(e)$ and $h_{s+1}(g)=h_{s}(g)$ for all $g\in G_{s}$.
\end{itemize}
\item If $s=\langle e,3t+3\rangle$ for some $e,t\in\mathbb{N}$ and
\begin{itemize}
\item If $h_{s}(x_{e})$ or $h_{s}(y_{e})$ has not been defined, then let
$h_{s+1}(x_{e})=0^{2e+1}1$ and $h_{s+1}(y_{e})=0^{2e+2}1$, $h_{s+1}(g)=h_{s}(g)$
for all $g\in G_{s}$ and let $G_{s+1}=G_{s}\cup\{x_{e},y_{e}\}$.
\item If $h_{s}(x_{e})$ and $h_{s}(y_{e})$ have been defined and $\Phi_{e}(e)[s]\downarrow=0$,
let $k$ be a fresh large even number such that  $k>|(h_s({g_1}))_1|+|(h_s(g_2))_1|$ for all $g_1, g_2 \in G_{s}$ , where $(h_s({g}))_1$ is the first coordinate of $h_s(g)$. Then let $G_{s+1}=G_{s}$ and for all $g\in G_{s}$,
define
\begin{equation*}
h_{s+1}(g)=h_{s+1}(h_{s}^{-1}(a_{1},a_{2},...))=(a_{2e+3}k+a_{1}, a_{2},...,a_{2e+2}+a_{2e+3}, 0, a_{2e+4},...),
\end{equation*}
where $a_{2e+2}+a_{2e+3}$ is in the $2e+2$ position. If, instead,$\Phi_{e}(e)[s]\downarrow=1$,
choose $k$ to be a fresh large odd number such that $k>|(h_s({g_1}))_1|+|(h_s(g_2))_1|$ for all $g_1, g_2 \in G_{s}$ . Define $h_{s+1}(g)$
for all $g\in G_{s}$ as above. 
\end{itemize}
\end{itemize}

\vspace{3mm}
The following claims show that this construction gives a group $G$
that satisfies the theorem.

\begin{claim}
For all $s\in \mathbb{N}$, if $h_s$ is injective, then $h_{s+1}$ is injective.
\begin{proof}
Let $h_s$ be injective and $g_1,g_2\in G$. Let $h_s(g_1)=(a_1,a_2,...)$ and $h_s(g_2)=(b_1,b_2,...)$. Then $h_{s+1}(g_{1})=(a_{2e+3}k+a_{1}, a_{2},...,a_{2e+2}+a_{2e+3}, 0, a_{2e+4},...)$ and $h_{s+1}(g_{2})=(b_{2e+3}k+b_{1}, b_{2},...,b_{2e+2}+b_{2e+3}, 0, b_{2e+4},...)$. Assume $h_{s+1}(g_{1})=h_{s+1}(g_{2})$. Then all the coordinates of $h_s(g_{1})$ and $h_s(g_{2})$ are equal, except possibly the first one, the $2e+2$ and the $2e+3$ one. We also have $a_{2e+3}k+a_{1} =b_{2e+3}k+b_{1}$ and $ a_{2e+2}+a_{2e+3}=b_{2e+2}+b_{2e+3}$. From there we get
\begin{equation*}
\begin{split}
a_{1}-b_{1} & = (b_{2e+2}-a_{2e+2})k\\
 & = (b_{2e+3}-a_{2e+3})k
\end{split}
\end{equation*} 

\noindent and by our choice of $k$, this is only possible if $a_1=b_1$, $a_{2e+2}=b_{2e+2}$ and $a_{2e+3}=b_{2e+3}$. Hence, $h_{s}(g_1)=h_s(g_2)$ and since $h_s$ is injective, we have $g_1=g_2$. This $h_{s+1}$ is injective.
\end{proof}
\end{claim}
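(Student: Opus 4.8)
The plan is to reduce the claim to the injectivity of the single $\mathbb{Z}$-linear map that produces $h_{s+1}$ from $h_s$. First I would observe that in every case of the construction except the redefinition subcase $s=\langle e,3t+3\rangle$, the map $h_{s+1}$ agrees with $h_s$ on every previously defined element and, where a new element is introduced, it is assigned a value only when that value is not already realized; injectivity is then immediate. So the entire content of the claim sits in the redefinition subcase, where $h_{s+1}(g)=L(h_s(g))$ for the fixed map $L$ on $\mathbb{Z}^{(\omega)}$ given by the displayed formula. This $L$ is exactly of the shape appearing in Lemma 3.0.1, so it remaps the group structure correctly; what remains is to check that $L$ is injective on the finite set $h_s(G_s)$, for then injectivity of $h_s$ yields injectivity of $h_{s+1}$.

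Next I would write out the coordinatewise action of $L$. Starting from $h_s(g)=(a_1,a_2,\dots)$, the map alters only three coordinates: the first becomes $a_{2e+3}k+a_1$, the coordinate in position $2e+2$ becomes $a_{2e+2}+a_{2e+3}$, position $2e+3$ is set to $0$, and every remaining coordinate is left unchanged. Hence, assuming $h_{s+1}(g_1)=h_{s+1}(g_2)$ with $h_s(g_1)=(a_i)$ and $h_s(g_2)=(b_i)$, matching the untouched coordinates immediately gives $a_i=b_i$ for all $i\notin\{1,2e+2,2e+3\}$, and we are reduced to the two equations $a_{2e+3}k+a_1=b_{2e+3}k+b_1$ and $a_{2e+2}+a_{2e+3}=b_{2e+2}+b_{2e+3}$ in the three remaining unknown differences.

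The crux, and the one step that is not purely formal, is the first equation, rewritten as $(a_{2e+3}-b_{2e+3})k=b_1-a_1$. Because $L$ zeroes out position $2e+3$, it collapses three active input coordinates into two nonzero output slots, so injectivity cannot come from linear algebra alone and must use the arithmetic size of $k$. Here I would invoke the defining property of $k$ from the construction, namely that $k>|(h_s(g_1))_1|+|(h_s(g_2))_1|$ for all $g_1,g_2\in G_s$, which forces $|b_1-a_1|<k$. An integer multiple of $k$ whose absolute value is smaller than $k$ must be $0$, so $a_{2e+3}=b_{2e+3}$ and therefore $a_1=b_1$; substituting into the second equation gives $a_{2e+2}=b_{2e+2}$. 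All coordinates now agree, so $h_s(g_1)=h_s(g_2)$, and injectivity of $h_s$ gives $g_1=g_2$. The only bookkeeping to verify carefully is that the index arithmetic in the definition of $L$ lines up as claimed and that the size bound on $k$ applies to the specific pair $g_1,g_2$ at hand, both of which are read off directly from the construction.
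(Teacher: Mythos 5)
Your proposal is correct and follows essentially the same route as the paper: compare the untouched coordinates, reduce to the two equations $a_{2e+3}k+a_1=b_{2e+3}k+b_1$ and $a_{2e+2}+a_{2e+3}=b_{2e+2}+b_{2e+3}$, and use the bound $k>|(h_s(g_1))_1|+|(h_s(g_2))_1|$ to conclude that a multiple of $k$ of absolute value less than $k$ must vanish, whence all coordinates agree and injectivity of $h_s$ finishes the argument. Your explicit dispatch of the element-adding cases (where fresh elements receive unrealized values) and your framing of the redefinition as a single linear map of the shape in Lemma 3.0.1 are minor additions of care rather than a different approach.
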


\begin{claim}
$h_{s}^{-1}(h_{s}(i)+h_{s}(j))=h_{s+1}^{-1}(h_{s+1}(i)+h_{s+1}(j))$ for
all $i,j\in G_{s}$.
\begin{proof}
This follows directly from Lemma 3.0.1 and Claim 3.0.2.
\end{proof}
\end{claim}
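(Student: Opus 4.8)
The plan is to prove the claim by a case analysis on the type of construction step performed at stage $s+1$, reducing the only substantive case to Lemma 3.0.1. Since $i$ and $j$ are assumed to lie in $G_s$, I would first dispatch the steps in which $h$ is not altered on the old elements.

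In the steps $s=\langle e,3t+1\rangle$ (requirement $A_e$) and $s=\langle e,3t+2\rangle$ (requirement $I_e$), and in the subcase of $s=\langle e,3t+3\rangle$ where we merely declare the witnesses $x_e,y_e$, the construction keeps $h_{s+1}(g)=h_s(g)$ for every $g\in G_s$ and at most adjoins a single new element. Hence for $i,j\in G_s$ we have $h_{s+1}(i)+h_{s+1}(j)=h_s(i)+h_s(j)$, and if $l\in G_s$ is the element realizing this sum under $h_s$ then $h_{s+1}(l)=h_s(l)$ realizes it under $h_{s+1}$ as well, so both inverse expressions denote the same $l$ and the equality is immediate.

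The one nontrivial case is the subcase of $s=\langle e,3t+3\rangle$ in which $\Phi_e(e)[s]\downarrow$ and $h_{s+1}$ is redefined on all of $G_s$ by
\[
h_{s+1}\bigl(h_s^{-1}(a_1,a_2,\dots)\bigr)=(a_{2e+3}k+a_1,\ a_2,\dots,\ a_{2e+2}+a_{2e+3},\ 0,\ a_{2e+4},\dots).
\]
Here I would check that each output coordinate is a fixed integer-linear combination of the input coordinates $(a_1,a_2,\dots)$, with coefficients depending only on $e$ and the now-fixed integer $k$ and not on $g$: the first coordinate is $ka_{2e+3}+a_1$, the $(2e+2)$nd is $a_{2e+2}+a_{2e+3}$, the $(2e+3)$rd is $0$, and every remaining coordinate is copied unchanged. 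This is exactly the form of the map $h$ in Lemma 3.0.1 applied with $f=h_s$, so $h_{s+1}=L\circ h_s$ for the integer matrix $(k_{ji})$ encoding this linear map $L$, and Lemma 3.0.1 then gives $h_{s+1}^{-1}(h_{s+1}(i)+h_{s+1}(j))=h_s^{-1}(h_s(i)+h_s(j))$.

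The single hypothesis of Lemma 3.0.1 that must be secured before invoking it is the injectivity of $h_{s+1}$, and this is precisely Claim 3.0.2 (the injectivity of $h_s$ itself propagating from $h_0$ by Claim 3.0.2 and induction on $s$). I expect the main, though routine, obstacle to be the bookkeeping verification that the displayed coordinate transformation really has the fixed-integer-linear shape demanded by Lemma 3.0.1; once that is confirmed, the claim follows by combining Lemma 3.0.1 with Claim 3.0.2, the remaining cases being trivial as above.
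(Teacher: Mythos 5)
Your proposal is correct and matches the paper's proof, which likewise disposes of the claim by combining Lemma 3.0.1 (applied to the stage-$s{+}1$ redefinition, which has exactly the fixed integer-linear coordinate form required) with the injectivity supplied by Claim 3.0.2; the paper simply states this in one line, while you spell out the case analysis and the verification that the $R_e$-action is such a linear map. No gap.
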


\begin{claim}
$h(x)=\underset{s\rightarrow\infty}{\lim}h_{s}(x)$ exists for all
$x\in G$.
\begin{proof}
We will show that for every $x\in G$, there are only finitely many stages
$s$ such that $h_{s+1}(x)\neq h_{s}(x)$ and then the claim follows.
Denote the ith coordinate of $h_{s}(x)$ by $(h_{s}(x))_{i}$, for
$i,s\in\mathbb{N},i>0$. Notice that for any $x\in G$, $h_{s+1}(x)\neq h_{s}(x)$
can only happen at stages $s+1$ where we are considering some $R_{e}$,
and the only coordinates that can change at that stage are $(h_{s}(x))_{1},(h_{s}(x))_{2e+2}$
and $(h_{s}(x))_{2e+3}$. We are going to refer to $(h_{s}(x))_{2e+2}$
and $(h_{s}(x))_{2e+3}$ as the $e$-relevant coordinates. Notice
that if all the $e$-relevant coordinates are $0$ then $h_{s+1}(x)=h_{s}(x)$.

Let $J>0$ be the index of the
rightmost nonzero coordinate of $h_{s}(x)$. Then any $R_{e'}$ such that $2e'+2>J$ will not affect $h_{s}(x)$ since all the $e'$-relevant coordinates are $0$.  Hence there are only finitely many requirements that can change $h_{s}(x)$. Once a requirement is satisfied, we never consider it again, so $h_{s}(x)$ changes at most finitely many times.

\end{proof}
\end{claim}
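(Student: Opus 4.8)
The plan is to prove that for each fixed $x\in G$ the sequence $\bigl(h_s(x)\bigr)_{s}$ is eventually constant, from which the existence of the limit is immediate. First I would isolate when a coordinate of $h_s(x)$ can move: reading off the construction, $h_s(x)$ is altered only at a stage $s=\langle e,3t+3\rangle$ on which $R_e$ acts, and the redefinition formula there changes only the first coordinate together with the two coordinates in positions $2e+2$ and $2e+3$ (the $e$-relevant coordinates), fixing all others. In particular, if both $e$-relevant coordinates of $h_s(x)$ vanish, the formula returns $h_{s+1}(x)=h_s(x)$, so acting on $R_e$ leaves $x$ untouched.

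Next I would bound, once and for all, which requirements can move $x$. Let $J$ be the index of the rightmost nonzero coordinate of $h_s(x)$ at the first stage where $x$ is defined. The point to verify is that no redefinition ever produces a nonzero entry strictly to the right of position $J$: the $R_e$-formula only touches positions $1$, $2e+2$, and $2e+3$, and in fact zeroes position $2e+3$, so it cannot enlarge the support on the right. Thus $J$ remains a permanent upper bound on the support of $h_s(x)$, and any $R_{e'}$ with $2e'+2>J$ has both relevant coordinates equal to $0$ and so never affects $x$. Only the finitely many requirements $R_{e'}$ with $2e'+2\le J$ remain as candidates.

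Finally I would use that each requirement acts at most once: once $R_{e'}$ is satisfied — at the stage where $\Phi_{e'}(e')$ first halts and we perform the redefinition — it is never revisited. Together with the previous paragraph this bounds the number of stages that change $h_s(x)$ by a finite quantity, so the sequence stabilizes and $h(x)=\lim_{s\to\infty}h_s(x)$ exists. I expect the middle step to be the main obstacle: establishing that the support of $h_s(x)$ never grows to the right is exactly what keeps the set of potentially active requirements finite. In checking it one should also confirm the edge case in which acting on some $R_{e'}$ with $2e'+2\le J$ turns a previously-zero relevant coordinate nonzero; this triggers no new requirement, because the only requirement having position $2e'+2$ among its relevant coordinates is $R_{e'}$ itself, which has already acted.
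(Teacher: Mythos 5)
Your proof is correct and takes essentially the same approach as the paper's: isolate the $e$-relevant coordinates, note that only the finitely many requirements $R_{e'}$ with $2e'+2\le J$ can move $h_s(x)$, and conclude using the fact that each requirement acts at most once. The one place you go beyond the paper is in explicitly verifying that the redefinition never extends the support of $h_s(x)$ to the right (together with the parity check that position $2e'+2$ is relevant only to $R_{e'}$ itself), a point the paper leaves implicit but which is needed for its choice of $J$ to be a permanent bound.
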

\vspace{-5mm}
\begin{claim}
$h$ is injective.
\begin{proof}
This follows from Claim 3.0.3 and Claim 3.0.4.
\end{proof}
\end{claim}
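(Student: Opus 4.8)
The plan is to obtain injectivity of the limit map $h$ directly from the two facts just established: that each finite-stage map $h_s$ is injective (Claim 3.0.2, proved by induction on $s$ starting from the evidently injective $h_0$ on $\{0,b\}$), and that for every $x\in G$ the sequence $(h_s(x))_s$ is eventually constant, so that $h(x)=\lim_s h_s(x)$ is genuinely attained at a finite stage (Claim 3.0.4).

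First I would fix $x,y\in G$ with $h(x)=h(y)$ and aim to conclude $x=y$. Since $G=\bigcup_s G_s$, both elements enter the construction at some finite stage; and since, by Claim 3.0.4, each of $h_s(x)$ and $h_s(y)$ changes only finitely often, I can pass to a single stage $s$ that is simultaneously large enough to have $x,y\in G_s$ and late enough that both maps have already reached their final values, i.e. $h_s(x)=h(x)$ and $h_s(y)=h(y)$.

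At that stage the hypothesis $h(x)=h(y)$ reads $h_s(x)=h_s(y)$, and injectivity of $h_s$ from Claim 3.0.2 forces $x=y$, which is exactly the assertion of the claim.

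The one point that needs care -- and the reason Claim 3.0.4 is indispensable -- is the passage from \emph{injective at every finite stage} to \emph{injective in the limit}: a priori, two elements could remain distinct at each finite stage yet acquire equal limit values. Claim 3.0.4 closes this gap by guaranteeing that each $h_s(x)$ stabilizes, so the limit value is literally the value already present at some finite stage, where per-stage injectivity applies. I expect no genuine obstacle beyond synchronizing the two stabilization stages for $x$ and $y$, which is immediate since only finitely many stabilization events need be cleared and they can be passed at once; the remainder of the argument is bookkeeping.
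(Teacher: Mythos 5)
Your proof is correct and is essentially the argument the paper intends: combine stabilization (Claim 3.0.4, so $h(x)$ equals $h_s(x)$ at some sufficiently late finite stage) with per-stage injectivity (Claim 3.0.2 together with the evidently injective base map $h_0$) to transfer injectivity to the limit. The paper's one-line proof cites Claim 3.0.3 rather than Claim 3.0.2, but the substance is the same, and the stage-wise injectivity you invoke is what the argument actually requires.
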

\noindent \textup{Now, since}

$h_{s+1}(0)=h_{s+1}(0,0,0,...)=(0k+0,0,0,...,0+0,0,0...)=(0,0,0,...)$

$h_{s+1}(b)=h_{s+1}(h_{s}^{-1}(1,0,0,....))=(0k+1,0,0,...,0+0,0,0,...)=(1,0,0,....)$, 

\noindent \textup{the values of} $h(0)$ and $h(b)$ \textup{never change throughout
the construction, so we have} $h(0)=0,h(b)=1$.

\textup{Similarly, for every} $s$ \textup{we have,} $h_{s+1}(x_{e})=h_{s}(x_{e})$ \textup{and} $h_{s+1}(y_{e})=h_{s+1}(x_{e}) + kh_{s+1}(b)$ \textup{for some positive integer} $k$\textup{, and hence} $h(y_{e})=h(x_{e})+kh(b)$. 

\begin{claim}
G as defined above satisfies the conditions of the theorem.
\begin{proof}
$G$ is a computable abelian group since for all $i,j\in G$, $i+j,-i,-j$
are defined at some stage and to compute $i+j$, we only need to find
the first stage $s$ at which $h_{s}^{-1}(h_{s}(i)+h_{s}(j))$ is
defined. Let $c:G\rightarrow\{c_{1},c_{2}\}$ be a $2$-coloring of
$G$ with no monochromatic solutions to $x-y=b$. Define a function $g$ in the following way:
\[
g(e)=\begin{cases}
\begin{array}{cc}
0, & \text{if\ \ensuremath{c(x_{e})=c(y_{e})}}\\
1, & \text{if}\ \ensuremath{c(x_{e})\ne c(y_{e})}
\end{array}.\end{cases}
\]
If $\Phi_{e}(e)\downarrow$, then there is a stage $s$ at which
$\Phi_{e}(e)[s]\downarrow$. Hence there is an integer $k$
such that $h(y_{e})=h(x_{e})+kh(b)$. Since $c$ has no monochromatic
solutions to $x-y=b$, it follows that $c(x+b)\neq c(x)$ for any
$x\in G$. Thus, if $k$ is even, we have $c(x_{e})=c(x_{e}+kb)=c(y_{e})$
and hence, $g(e)=0=\Phi_{e}(e)$. Similarly, if $k$ is odd, it must
be the case that $c(x_{e})\neq c(y_{e})$ and hence, $g(e)=1=\Phi_{e}(e)$.
We conclude that $c$ computes a $\{0,1\}-$valued extension of the
function $e\mapsto\Phi_{e}(e)$, so $c$ has PA degree.
\end{proof}
\end{claim}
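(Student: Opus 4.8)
The plan is to verify the two assertions bundled into the claim separately: first that $G=\bigcup_s G_s$ is a computable abelian group, and second that any admissible $2$-coloring computes a PA degree. For computability, I would observe that the requirements $A_e$ and $I_e$ guarantee that for every pair $i,j\in G$ both the sum $i+j$ and the inverses $-i,-j$ are introduced into some $G_s$ at a finite stage. Since the construction is effective and $h$ is the stagewise limit of the injective maps $h_s$ (this limit exists by Claim 3.0.4 and is injective by Claim 3.0.5), to compute $i+j$ it suffices to search for the first stage $s$ at which $h_s^{-1}(h_s(i)+h_s(j))$ is defined; the stagewise compatibility recorded in Claim 3.0.3 ensures that this value agrees with the true group sum. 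Thus addition and inversion are computable, and $(G,+,0)$ is a computable abelian group isomorphic to $\mathbb{Z}^{(\omega)}$ via $h$.

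For the second assertion, fix a $2$-coloring $c\colon G\to\{c_1,c_2\}$ with no monochromatic solution to $x-y=b$, and define the $c$-computable function
$$
g(e)=\begin{cases}0 & \text{if } c(x_e)=c(y_e)\\ 1 & \text{if } c(x_e)\neq c(y_e)\end{cases}
$$
which is computable from $c$ because the witnesses $x_e,y_e$ enter $G$ at a fixed stage of the construction and can be located effectively. The observation I would isolate is a parity fact about the coloring: since $c$ admits no monochromatic solution to $x-y=b$, we have $c(x)\neq c(x+b)$ for every $x\in G$, and hence by a short induction $c(x+kb)=c(x)$ if and only if $k$ is even. This converts the purely combinatorial hypothesis into an arithmetic statement about the colors of elements differing by a multiple of $b$.

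Finally I would connect this to the requirement $R_e$. Whenever $\Phi_e(e)\!\downarrow$, it halts at some stage, and the construction enforces $h(y_e)=h(x_e)+k\,h(b)$ for an integer $k$ whose parity matches the value of $\Phi_e(e)$; equivalently $y_e=x_e+kb$. Combining this with the parity fact yields $c(x_e)=c(y_e)$ exactly when $k$ is even, that is, exactly when $\Phi_e(e)=0$, so $g(e)=\Phi_e(e)$ on the domain where the latter is defined. Hence $g$ is a total $\{0,1\}$-valued extension of $e\mapsto\Phi_e(e)$, and by the proposition stating that a degree is PA if and only if it computes such an extension, $c$ (which computes $g$) has PA degree. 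The main obstacle is not any isolated calculation but making the last two steps airtight: one must confirm that the limit map actually realizes the relation promised by $R_e$, i.e.\ that the later redefinitions of $h$ preserve the parity of the integer $k$ linking $x_e$ and $y_e$, and that the parity fact is valid in the completed group $G$ and not merely at finite approximations.
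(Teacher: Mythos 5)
Your proposal is correct and follows essentially the same route as the paper: computability of $G$ via searching for the first stage where $h_s^{-1}(h_s(i)+h_s(j))$ is defined, the same $c$-computable function $g$, the same parity observation $c(x)\neq c(x+b)$, and the conclusion via the characterization of PA degrees as those computing total $\{0,1\}$-valued extensions of $e\mapsto\Phi_e(e)$. The ``obstacle'' you flag at the end (that later redefinitions of $h$ preserve the relation $h(y_e)=h(x_e)+kh(b)$ with the same parity of $k$) is exactly what the paper disposes of in the remark immediately preceding this claim, since the group-element identity $y_e=x_e+kb$, once created, is independent of the subsequent choices of $h_s$.
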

\textup{Thus we built a group} $G$ \textup{that is isomorphic to a subgroup of} $\mathbb{Z}^{(\omega)}$\textup{.
We claim that} $G$ \textup{is infinitely generated. Assume
otherwise, and let} $g_{1},...,g_{l}$ \textup{be the generators. Let} $L$
\textup{be the maximal nonzero coordinate in} $h(g_{1}),\dotsc,h(g_{l})$\textup{. By our construction,
we will eventually add an element} $g$ \textup{to our group such that the
maximal nonzero coordinate of} $h(g)$ \textup{is} $>L$\textup{. Then} $g$ \textup{cannot be written
as a linear combination of} $g_{1},...,g_{l}$\textup{. Hence} $G$ \textup{is infinitely generated and
since all infinitely generated subgroups of} $\mathbb{Z}^{(\omega)}$
\textup{are isomorphic to} $\mathbb{Z}^{(\omega)}$\textup{, we get that, in fact,
our group} $G$ \textup{is isomorphic to} $\mathbb{Z}^{(\omega)}$.
\end{proof}

Note that if the $2$-colorings of an abelian group $G$ are represented
by a binary tree, then we can search through the tree and cut all
branches where we find a monochromatic solution to $x-y=b$. Thus
we can find an infinite path in this tree that will represent a coloring
with no monochromatic solutions to the equation and PA degrees compute
these paths.  Similarly, since the PA degrees compute paths in $k$-branching trees for a fixed integer $k>2$, we get that 
the PA degrees are an upper bound for the case $n\geq2$. Hence the PA degrees are the \textit{best possible bound} for this
problem.

\begin{prop}
Let $n \in \mathbb{N}, n>0$. For every computable abelian group $G$ and every $b\in G, b\neq 0$, there is a $k$-coloring of PA degree with no pairwise monochromatic solution to

$$(x_{1}-y_{1})+...+(x_{n}-y_{n})=b,$$

\noindent where 

\[
k=\begin{cases}
\hspace{4mm}2n & \text{if }2\mid \ordb \text{ or } \ordb=\infty\\
\left\lceil \dfrac{2np}{p-1}\right\rceil  & \text{if }\ordb \text{ is odd and }p\text{ is the largest prime divisor of }\ordb
\end{cases}.
\]

\end{prop}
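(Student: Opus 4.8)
The plan is to realize the valid $k$-colorings as the infinite paths through an infinite computable $k$-branching tree, and to use Straus' theorem solely to guarantee that this tree is infinite. Since the PA degrees compute paths through every infinite computable $k$-branching tree (as recalled just before the statement), any PA degree will then compute such a path, i.e.\ a total $k$-coloring of $G$ with no pairwise monochromatic solution, which is the content of the proposition: the problem is never harder than the PA degrees, matching the lower bound of Theorem 3.1.

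Concretely, fix a computable enumeration $G=\{g_0,g_1,g_2,\dots\}$ and let $T\subseteq k^{<\mathbb{N}}$ be the set of strings $\sigma$ for which the partial coloring $g_i\mapsto\sigma(i)$, $i<|\sigma|$, has no pairwise monochromatic solution to $(x_1-y_1)+\dots+(x_n-y_n)=b$ with all $2n$ entries among $g_0,\dots,g_{|\sigma|-1}$. First I would observe that $T$ is closed under initial segments, since a pairwise monochromatic solution witnessed by an initial segment $\tau$ of $\sigma$ is \emph{a fortiori} one witnessed by $\sigma$, so that $\sigma\in T$ forces $\tau\in T$. Next I would verify that $T$ is computable: to decide whether $\sigma\in T$ there are only finitely many $2n$-tuples to inspect, and for each one the computable group operations of $G$ let us test both whether it satisfies the equation and whether $\sigma$ colors each pair $x_i,y_i$ alike.

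The crux is that $T$ is infinite, and here Straus' theorem enters. Applied to $G$ and $b$, it furnishes a total $k$-coloring $c^\ast$ of $G$ — with $k$ exactly the value appearing in the statement — having no pairwise monochromatic solution. For every $s$, the restriction of $c^\ast$ to $g_0,\dots,g_{s-1}$ is a node of $T$, since any solution confined to those elements would be a solution for $c^\ast$; hence $T$ has nodes of every length and is infinite. Being an infinite computable $k$-branching tree, $T$ has a path $P$ computable from any PA degree, and $P$ yields the coloring $c(g_i)=P(i)$. Finally I would check that $c$ has no pairwise monochromatic solution: any such solution involves only finitely many elements, all lying in $\{g_0,\dots,g_{s-1}\}$ for some $s$, which would contradict $P\upharpoonright s\in T$.

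The only genuine obstacle is the appeal to Straus' theorem for the infinitude of $T$ — this is where all the combinatorial content lives, and without it we would have no reason to believe the pruned tree has nodes at every level. Everything else is routine: the computability of the pruning follows from $G$ being a computable group, and the extraction of the path at the level of the PA degrees is exactly the cited fact that the PA degrees compute paths through infinite computable $k$-branching trees.
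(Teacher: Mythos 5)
Your proposal is correct and is essentially the paper's own argument: the paper justifies this proposition in the remark immediately preceding it, by representing the $k$-colorings as a computable $k$-branching tree, pruning branches containing pairwise monochromatic solutions, invoking Straus' theorem (implicitly) for the tree's infinitude, and using the fact that PA degrees compute paths through infinite computable $k$-branching trees. You have simply filled in the routine verifications (closure under initial segments, computability of the pruning, and that a path yields a valid total coloring) that the paper leaves as a sketch.
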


Theorem 3.1 gives the best possible result of this sort:
\begin{prop}
Let $n>2, n \in \mathbb{N}$. Then for every computable abelian group $G$, there is a computable $n$-coloring $c$ with no monochromatic solutions to  $x-y=b$, where $b\in G, b\neq 0$.

\begin{proof} 
Let $G=\{g_1,g_2,g_3,...\}$ be a computable abelian group. It is enough to show that the claim holds for $n=3$. Let $c_1,c_2,c_3$ be three different colors. Color $g_1$ with $c_1$. If $|g_1-g_2|=b$, color $g_2$ with $c_2$. If not, color $g_2$ with $c_1$. Continue in a similar manner. For $g_k$, check whether $|g_k-g_i|=b$ for all $i<k$. There are three situations that can occur:
\begin{itemize}
\item There is exactly one $i<k$ such that $|g_k-g_i|=b$. Then color $g_k$ with a different color than $g_i$. 
\item There is no such $i<k$. Then color $g_k$ with $c_1$.
\item There are two $i_1, i_2<k$ such that $|g_k-g_{i_1}|=b$ and $|g_k-g_{i_2}|=b$. In this case, color $g_k$ with a different color than $g_1$ and $g_2$. This is possible since we have 3 colors. 
\end{itemize}
\end{proof}
\end{prop}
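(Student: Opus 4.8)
The plan is to reduce at once to the case $n=3$ and then color $G$ greedily along a fixed computable enumeration $g_1,g_2,g_3,\dots$ of its elements. Any $3$-coloring with no monochromatic solution to $x-y=b$ is in particular such an $n$-coloring for every $n\geq 3$, since we may simply refrain from using the surplus colors, so it suffices to produce a computable $3$-coloring. The first observation is that a coloring $c$ has no monochromatic solution to $x-y=b$ exactly when $c(x)\neq c(x+b)$ for every $x\in G$; equivalently, $c$ is a proper coloring of the graph $\Gamma$ on vertex set $G$ whose edges join $x$ to $y$ precisely when $y=x+b$ or $y=x-b$.

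The key structural fact is that $\Gamma$ has maximum degree at most $2$: the only possible neighbors of a vertex $x$ are $x+b$ and $x-b$, and these two coincide when $\ordb=2$. Hence, coloring the elements in the order $g_1,g_2,g_3,\dots$, each $g_k$ has at most two neighbors among $g_1,\dots,g_{k-1}$, so at most two of the three available colors are forbidden and the coloring can always be extended. Concretely, to color $g_k$ I would search through $g_1,\dots,g_{k-1}$ for those $g_i$ with $g_k-g_i=b$ or $g_i-g_k=b$ and assign $g_k$ any color not already used on such a $g_i$.

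For effectiveness, recall that in a computable abelian group addition and inversion are computable and equality is decidable, so for each pair $(k,i)$ one can decide whether $g_k-g_i=b$; thus the greedy rule is an algorithm and the resulting $c$ is computable. The only point requiring verification---and the sole place the hypothesis $n>2$ enters---is that the greedy step never fails, which is guaranteed precisely by the degree bound $2<3$. I expect no further obstacle: the bound $2<3$ supplies exactly the slack that a proper $2$-coloring lacks, since a $2$-coloring would pin down the color of each component of $\Gamma$ by a global parity unavailable when the $g_i$ arrive in an arbitrary computable order---this being the very phenomenon that Theorem 3.1 turns into a PA lower bound.
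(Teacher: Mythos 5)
Your proposal is correct and is essentially the paper's own argument: both reduce to $n=3$ and greedily color along the computable enumeration $g_1,g_2,\dots$, using the fact that each $g_k$ has at most two earlier elements at distance $b$ (namely $g_k+b$ and $g_k-b$), so three colors always suffice. Your graph-theoretic phrasing (proper coloring of a max-degree-$2$ graph $\Gamma$) is just a cleaner packaging of the paper's explicit three-case analysis, and it also silently repairs a typo in the paper's third case, which says ``a different color than $g_1$ and $g_2$'' where $g_{i_1}$ and $g_{i_2}$ are meant.
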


As we mentioned previously, the same exact proof as in Theorem 3.1 applies for the Straus* theorem with the additional caveat that we also need a mapping from $G$ to $G$. We can take this mapping to be the identity mapping, and in particular, it is computable.

\begin{manualthm}{3.1*}

There is a computable abelian group $(G,+,0)$, an element $b\in G,b\neq0$ and a computable mapping $f_1:G\rightarrow G$
such that, if $c$ is a 2-coloring of $G$ for which the equation 
\[
f_1(x)-f_1(y)=b
\]
 has no monochromatic solutions, then $c$ has PA degree.
\end{manualthm}

The * version of Proposition 3.0.7 also holds, so we get that the PA degrees are the best possible computability-theoretic bounds for the Straus* theorem in the case $n=1$.

If we add the additional condition that
there is an $m>0$ such
that for every $k\in\mathbb{Z}$, all the $kmb$ are colored with
the same color, we get that the PA degrees are a lower bound for the case $n\geq2$. The idea of the proof is similar to the case $n=1$. By adding this condition, we know the behaviour of any $2n$-coloring with no pairwise monochromatic solutions to $(x_{1}-y_{1})+\dots+(x_{n}-y_{n})=b$ (Figure 3). Namely, if $x_1$ and $y_1$ are such that $|x_1-y_1|=(km+1)b$ for some $k\in\mathbb{Z}$, then they have to be colored with a different color. Otherwise, these $x_1$ and $y_1$ together with any $x_2$ and $y_2$ such that $|x_2-y_2|=kmb$ are going to give a pairwise monochromatic solution to the equation $(x_{1}-y_{1})+\dots+(x_{n}-y_{n})=b$ for any $n\geq2$.

\begin{figure}[h]
\caption{The behavior of a $2n$-coloring with no pairwise monochromatic solutions to the equation $(x_{1}-y_{1})+\dots+(x_{n}-y_{n})=b$ under an additional condition}
\centering
\includegraphics[width=14cm,scale=0.7]{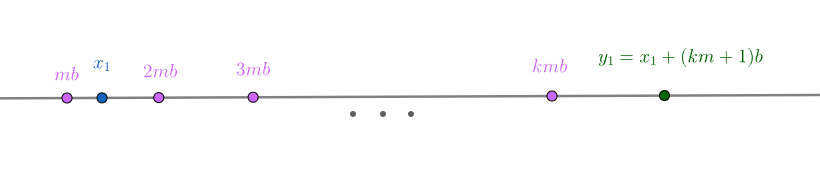}
\end{figure}

\begin{manualthm}{3.2}
Let $n\in\mathbb{N},n\geq2$. There is a computable group $(G,+,0)$
and an element $b\in G,b\neq0$ such that, if $c$ is a $2n$-coloring
of $G$ for which the equation 
\begin{equation}
(x_{1}-y_{1})+...+(x_{n}-y_{n})=b
\end{equation}
has no pairwise monochromatic solutions and there is an $m>0$ such
that for every $k\in\mathbb{Z}$, all the $kmb$ are colored with
the same color, then $c$ has PA degree.
\end{manualthm}
\begin{proof}
Similarly to the previous theorem, we are going to build such a group
$G$ in stages and at every stage $s$, we will define $G_{s}$ and
an injective mapping $h_{s}:G_{s}\rightarrow\mathbb{Z}^{(\omega)}$ such that $h_{s}^{-1}(h_{s}(i)+h_{s}(j))=h_{s+1}^{-1}(h_{s+1}(i)+h_{s+1}(j))$ for
all $i,j\in G_{s}$. Then $G=\bigcup G_{s}$. At the beginning
of the construction, we fix an element $b\neq0$ that will be sent
to $1\in\mathbb{\mathbb{Z}}^{(\omega)}$ during the construction.
Also, for every $e$, we fix $2n+1$ witnesses $x_{ei}, i=1,\dots,2n+1$, that will be sent to $0^{(e-1)(2n+1)+i-1}1$ during the construction
at the first stage they are defined. Let $\{\Phi_{e}\}_{e \in \mathbb{N}}$ be the standard enumeration of the $\binom{2n+1}{2}$-valued p.c. functions. Since we want to ensure that
the group we are building is computable, the requirements $A_{e}$
and $I_{e}$ are exactly as before:
\vspace{3mm}

\makebox[1.5cm]{$A_{e}$:}  If $e=\langle i,j\rangle$, there is an $l$ such that $h(l)=h(i)+h(j)$

\vspace{2mm}

\makebox[1.5cm]{$I_{e}$:}  There is an element $-e$ such that $h(e)+h(-e)=h(0)$

\vspace{2mm}

 \makebox[1.5cm]{$R_{e}$:}  Fix a bijection from $\{0,1,\dots,\binom{2n+1}{2}\}$ to $\{(x_{ei},x_{ej})|1\leq i \leq j \leq 2n+1\}$. Let $(x_{ei_{1}},x_{ei_{2}})$ be the image of $i$ under this bijection. If $\Phi_{e}(e)\downarrow=i$, then there is an integer $k$ such that $h(x_{ei_{2}})=h(x_{ei_{1}})+(mk+1)h(b)$.

\vspace{3mm}

\noindent Construction:

Stage $0$: Let $G_{0}=\{0,b\}$, $h_{0}(0)=0$ and $h_{0}(b)=1$.

Stage $s+1$:
\begin{itemize}
\item If $s=\langle e,3t+1\rangle$ or $s=\langle e,3t+2\rangle$ for some $e,t\in\mathbb{N}$
and $e=\langle i,j\rangle$ for some $i,j\in\mathbb{N}$, we proceed in the same
manner as before.
\item If $s=\langle e,3t+3\rangle$ for some $e,t\in\mathbb{N}$ and
\begin{itemize}
\item If there exists a $j\in\{1,2,\dots,2n+1\}$ such that $h_{s}(x_{ej})$
has not been defined, then let $h_{s+1}(x_{ej})=0^{(e-1)(2n+1)+j-1}1$ for
all such $j$ and $G_{s+1}=G_{s}\cup\{x_{e1},\dots,x_{e(2n+1)}\}$. 
\item If $h_{s}(x_{ej})$ has been defined for all $j\in\{1,2,\dots,2n+1\}$
and $\Phi_{e}(e)[s]\downarrow=i$ for some $i\in\{1,2,\dots,\binom{2n+1}{2}\}$, let $j_{1}$ and $j_{2}$ be the positions of the $1$ in $x_{ei_{1}}$ and $x_{ei_{2}}$,
respectively. Let $k$ be a fresh large number $k>|(h_s({g_1}))_1|+|(h_s(g_2))_1|$ for all $g_1, g_2 \in G_{s}$ , where $(h_s({g}))_1$ is the first coordinate of $h_s(g)$. Then for all $g\in G_{s}$, if $h_s(g)=(a_1,a_2,\dots)$, define
$$h_{s+1}(g)=((mk+1)a_{j{2}}+a_{1},a_{2},\dots,a_{j_{1}}+a_{j_{2}},a_{j_{1}+1},\dots,0,a_{j_{2}+1},\dots),$$
where $a_{j_{1}}+a_{j_{2}}$ is at the $j_{1}$
position and the $j_{2}$ position is $0$.
Let $G_{s+1}=G_{s}$.
\end{itemize}
\end{itemize}
\vspace{5mm}
The following claims are exactly as in the previous theorem and the proofs are very similar so we will omit them.

\begin{claim}
$h_{s}^{-1}(h_{s}(i)+h_{s}(j))=h_{s+1}^{-1}(h_{s+1}(i)+h_{s+1}(j))$ for
all $i,j\in G_{s}$.
\end{claim}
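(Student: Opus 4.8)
The plan is to follow the template used for the corresponding claim (Claim 3.0.3) in the proof of Theorem 3.1, which decomposed into two ingredients: the injectivity of the updated map $h_{s+1}$ (the analog of Claim 3.0.2) and the general transfer Lemma 3.0.1. So I would first prove that $h_{s+1}$ is injective whenever $h_s$ is, and then invoke Lemma 3.0.1 to obtain the stated equality of the two addition operations on $G_s$.

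For injectivity, observe that the update performed at a stage handling $R_e$ alters only the first, the $j_1$-th, and the $j_2$-th coordinates of $h_s(g)$. Writing $h_s(g_1)=(a_1,a_2,\dots)$ and $h_s(g_2)=(b_1,b_2,\dots)$ and assuming $h_{s+1}(g_1)=h_{s+1}(g_2)$, every coordinate outside $\{1,j_1,j_2\}$ immediately forces $a_i=b_i$, while the $j_2$-th coordinate (which equals $0$ in both images) yields no information. The two remaining equations are
\begin{equation*}
(mk+1)a_{j_2}+a_1=(mk+1)b_{j_2}+b_1, \qquad a_{j_1}+a_{j_2}=b_{j_1}+b_{j_2}.
\end{equation*}
The first rearranges to $a_1-b_1=(mk+1)(b_{j_2}-a_{j_2})$, exhibiting $a_1-b_1$ as a multiple of $mk+1$. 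Since $k$ was chosen with $k>|(h_s(g_1))_1|+|(h_s(g_2))_1|=|a_1|+|b_1|$ and $m\geq1$, we get $mk+1>|a_1-b_1|$, which forces $a_1=b_1$ and hence $a_{j_2}=b_{j_2}$. Substituting into the second equation gives $a_{j_1}=b_{j_1}$. Thus every coordinate of $h_s(g_1)$ and $h_s(g_2)$ agrees, so $h_s(g_1)=h_s(g_2)$, and injectivity of $h_s$ yields $g_1=g_2$.

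With injectivity established, the claim is immediate: $h_{s+1}$ is obtained from $h_s$ by a fixed integer-linear transformation of coordinates (the coefficients $1$ and $mk+1$ are fixed once $k,m,j_1,j_2$ are determined at this stage), so Lemma 3.0.1 applies with $f=h_s$ and, together with the injectivity of $h_{s+1}$, gives $h_s^{-1}(h_s(i)+h_s(j))=h_{s+1}^{-1}(h_{s+1}(i)+h_{s+1}(j))$ for all $i,j\in G_s$. The only step requiring genuine (though still elementary) care is the number-theoretic estimate, where the largeness of $mk+1$ relative to the existing first coordinates forces $a_1=b_1$; everything else is bookkeeping that mirrors Theorem 3.1 once the indices $2e+2,2e+3$ are replaced by $j_1,j_2$ and the scalar $k$ by $mk+1$.
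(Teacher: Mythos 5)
Your proposal is correct and takes exactly the approach the paper intends: the paper omits this proof, stating that it is ``exactly as in the previous theorem,'' where the corresponding claim follows from the injectivity of $h_{s+1}$ (Claim 3.0.2) together with Lemma 3.0.1. Your adaptation of the injectivity argument --- replacing the positions $2e+2$, $2e+3$ by $j_1$, $j_2$ and the scalar $k$ by $mk+1$, and using the largeness of $k$ to force the coordinates to agree --- is precisely the intended filling-in of those omitted details.
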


\begin{claim}
$h(x)=\underset{s\rightarrow\infty}{\lim}h_{s}(x)$ exists for all
$x\in G$.
\end{claim}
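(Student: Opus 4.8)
The plan is to follow the proof of the corresponding statement in Theorem 3.1 (Claim 3.0.4) almost verbatim, adjusting only the bookkeeping of which coordinates each requirement can disturb. As there, it suffices to show that for each fixed $x \in G$ there are only finitely many stages $s$ with $h_{s+1}(x) \neq h_s(x)$; then the sequence $(h_s(x))_s$ is eventually constant and $h(x) = \lim_{s\to\infty} h_s(x)$ exists. Writing $(h_s(x))_i$ for the $i$th coordinate, I first note that $h$ is only redefined on $G_s$ at stages $s+1 = \langle e, 3t+3\rangle$ that act for some $R_e$, since the $A_e$- and $I_e$-stages extend the domain without changing existing values.

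The main adjustment is to identify the relevant coordinates for the present construction. When $R_e$ acts on a value $i$ with associated positions $j_1 \le j_2$, the displayed redefinition changes $h_s(x)$ only in coordinates $1$, $j_1$ and $j_2$: coordinate $1$ becomes $(mk+1)a_{j_2}+a_1$, coordinate $j_1$ becomes $a_{j_1}+a_{j_2}$, and coordinate $j_2$ is set to $0$. Since the witnesses for $R_e$ are sent to $0^{(e-1)(2n+1)+i-1}1$, the positions $j_1,j_2$ always lie in the block $B_e = \{(e-1)(2n+1)+1,\dots,e(2n+1)\}$; I call the entries of $h_s(x)$ indexed by $B_e$ the $e$-relevant coordinates. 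If all $e$-relevant coordinates of $h_s(x)$ vanish, then $a_{j_1}=a_{j_2}=0$ and each of the three formulas above returns the original value, so $h_{s+1}(x)=h_s(x)$. Hence $R_e$ can change $h_s(x)$ only when some $e$-relevant coordinate is nonzero.

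I would then bound the active requirements exactly as in Claim 3.0.4. Let $J$ be the index of the rightmost nonzero coordinate of $h_s(x)$. Every $R_{e'}$ whose block lies entirely beyond $J$, i.e. with $(e'-1)(2n+1)+1 > J$, has all its $e'$-relevant coordinates equal to $0$ and so fixes $h_s(x)$; as the blocks $B_e$ are disjoint intervals of length $2n+1$, only finitely many meet $\{1,\dots,J\}$. Since each requirement is acted on at most once and is never revisited afterwards, this already shows that only finitely many stages can move $h_s(x)$, provided $J$ does not grow. For the latter I observe that a firing introduces new nonzero entries only in coordinates $1$ and $j_1$, both of which satisfy $1 \le j_1 \le j_2 \le J$ whenever the firing actually changes $x$ (it does so only if $a_{j_2}\neq 0$, forcing $j_2 \le J$), while coordinate $j_2$ is zeroed; thus the rightmost nonzero coordinate is nonincreasing in $s$. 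This monotonicity is the one point that genuinely differs from the $n=1$ case and is the only place care is needed: coordinate $1$ is rescaled by the possibly large factor $mk+1$, and one must confirm that this cannot push the rightmost nonzero coordinate outward. It cannot, because coordinate $1$ is the leftmost coordinate and the only coordinate that is emptied, $j_2$, is the largest of the three. With $J$ bounded by its value at the stage $x$ first enters $G$, only finitely many requirements ever change $h_s(x)$, and the limit exists.
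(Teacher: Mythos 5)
Your proof is correct and takes essentially the same approach the paper intends: the paper omits the proof of this claim, stating it is ``very similar'' to Claim 3.0.4 of Theorem 3.1, and your argument is precisely that adaptation, with the $e$-relevant coordinates now taken to be the block $B_e=\{(e-1)(2n+1)+1,\dots,e(2n+1)\}$. Your explicit verification that a firing changes $h_s(x)$ only when $a_{j_2}\neq 0$, so that the rightmost nonzero coordinate can never move outward, makes rigorous a monotonicity point that the paper's $n=1$ proof leaves implicit.
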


\begin{claim}
$h$ is injective.
\end{claim}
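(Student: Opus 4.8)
The plan is to follow the template of the corresponding claim in Theorem 3.1 and reduce injectivity of $h$ to two ingredients: that every finite-stage map $h_s$ is injective, and that the pointwise limit defining $h$ exists (the preceding claim). Only the first ingredient requires new work here, and I would prove it by induction on $s$. The base case is immediate, since $h_0$ sends $0\mapsto 0$ and $b\mapsto 1$. For the inductive step, assume $h_s$ is injective. At every stage except those handling some $R_e$, the map $h_{s+1}$ either agrees with $h_s$ on $G_s$ or extends it by assigning a fresh value to a newly added element, so injectivity is preserved trivially; the only substantive case is an $R_e$-stage.

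First I would analyze that case directly. Suppose $h_{s+1}(g_1)=h_{s+1}(g_2)$, and write $h_s(g_1)=(a_1,a_2,\dots)$ and $h_s(g_2)=(b_1,b_2,\dots)$. Reading off the transformation coordinate by coordinate, every position other than $1$, $j_1$, and $j_2$ gives $a_i=b_i$ outright, position $j_2$ is identically zero, and positions $1$ and $j_1$ produce
\begin{equation*}
a_1-b_1=(mk+1)(b_{j_2}-a_{j_2}),\qquad a_{j_1}+a_{j_2}=b_{j_1}+b_{j_2}.
\end{equation*}
Because $k$ was chosen so that $k>|(h_s(g_1))_1|+|(h_s(g_2))_1|$ and $m\geq 1$, we have $|a_1-b_1|<k<mk+1$, so the first equation forces $b_{j_2}=a_{j_2}$ and then $a_1=b_1$; substituting into the second gives $a_{j_1}=b_{j_1}$. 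Hence $h_s(g_1)=h_s(g_2)$, and injectivity of $h_s$ yields $g_1=g_2$, completing the induction.

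Finally I would deduce injectivity of $h$ exactly as in Theorem 3.1. Given $x,y\in G$ with $h(x)=h(y)$, both elements lie in $G_s$ for some finite $s$, and by the previous claim the sequences $(h_s(x))_s$ and $(h_s(y))_s$ are eventually constant; choosing $s$ large enough that $h_s(x)=h(x)$ and $h_s(y)=h(y)$ gives $h_s(x)=h_s(y)$, whence $x=y$ by injectivity of $h_s$. The only delicate point in the whole argument is the coordinate bookkeeping of the inductive step: here the multiplier is $(mk+1)$ rather than the bare $k$ of Theorem 3.1, and the affected positions $j_1,j_2$ are read off from the witnesses $x_{ei_1},x_{ei_2}$ rather than being the fixed indices $2e+2,2e+3$. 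Once one checks that the bound $|a_1-b_1|<mk+1$ still holds under the stated choice of $k$ (which it does, since $m\geq 1$), the estimate collapses the relevant coordinate differences to zero just as before, so this is a matter of careful indexing rather than a genuine obstacle.
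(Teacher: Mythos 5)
Your proof is correct and follows exactly the route the paper intends: the paper omits this proof in Theorem 3.2, stating it is ``very similar'' to the corresponding claims in Theorem 3.1, and you have filled in precisely those details---stagewise injectivity by induction (with the multiplier $(mk+1)$ and positions $j_1,j_2$ replacing $k$ and $2e+2,2e+3$), combined with the existence of the limit $h(x)=\lim_s h_s(x)$. Your coordinate analysis and the observation that $m\geq 1$ makes the bound on $k$ still suffice are exactly the adaptations the paper's omitted argument requires.
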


The only claim left to prove is the following:
\begin{claim}
$G=\bigcup G_{s}$ as defined by this construction satisfies the conditions
of the theorem.
\begin{proof}
$G$ is a computable abelian group for the same reasons as above.
Let $c$ be a coloring of $G$ with no pairwise monochromatic solutions
to $(1)$. Define a function $g:\mathbb{N}\rightarrow\{0,1,2,\dots,\binom{2n+1}{2}\}$ in the following way:
\begin{equation*}
g(e)=\text{the least }i\text{ such that }c(x_{ei_{1}})=c(x_{ei_{2}}).
\end{equation*}

Such an $i$ exists since for every $e$, we have $2n+1$ witnesses
and these are enough to ensure that we have a monochromatic pair
among them. Assume $\Phi_{e}(e)\downarrow=i$. Then there is a large $k$ such that $h(x_{ei_{2}})=h(x_{ei_{1}})+(mk+1)h(b)$. Now, by our assumption, $mkb$ and $2mkb$ are colored with the same color, so these, together with $x_{ei_{1}}$ and $x_{ei_{2}}$ give a pairwise monochromatic solution to $(1)$. Hence, for all $e$, if $\Phi_{e}(e)\downarrow$, then $\Phi_{e}(e)\neq g(e)$. Thus $g$ is a $\binom{2n+1}{2}$-bounded DNC function, so $c$ has PA degree.
\end{proof}
\end{claim}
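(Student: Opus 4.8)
The plan is to establish the two halves of the theorem's conclusion separately: that $(G,+,0)$ is a computable abelian group with $b\neq0$, and that every coloring satisfying the hypotheses computes a bounded diagonally non-computable function and hence has PA degree by Jockusch's theorem.

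For the group structure I would argue exactly as in Theorem 3.1. The three preceding claims give that $h=\lim_{s\to\infty}h_{s}$ is a well-defined injective map $G\to\mathbb{Z}^{(\omega)}$ agreeing with every $h_{s}$, so setting $i+j:=h^{-1}(h(i)+h(j))$ is unambiguous and inherits commutativity and associativity from $\mathbb{Z}^{(\omega)}$, while the requirements $A_{e}$ and $I_{e}$ guarantee closure under $+$ and under inverses. Computability follows because to evaluate $i+j$ (or $-i$) one searches for the least stage at which the corresponding element is introduced, and $A_{e}/I_{e}$ ensure such a stage exists. Finally $b\neq0$, since $h(b)=1\neq0=h(0)$ and $h$ is injective.

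For the degree computation, fix a $2n$-coloring $c$ with no pairwise monochromatic solution and an $m>0$ for which all the $kmb$ share a single color. Relative to the fixed bijection enumerating the $\binom{2n+1}{2}$ witness pairs, I would define $g(e)$ to be the least $i$ such that $(x_{ei_{1}},x_{ei_{2}})$ is monochromatic. This $g$ is total, since the $2n+1$ witnesses $x_{e1},\dots,x_{e(2n+1)}$ use only $2n$ colors, so the pigeonhole principle produces a monochromatic pair, and $g\leq_{T}c$ because only finitely many queries to $c$ are needed. It then suffices to show $g$ is diagonally non-computable; granting this, $c$ computes a $\binom{2n+1}{2}$-bounded DNC function and hence, by Jockusch's theorem, has PA degree.

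The heart of the argument, and the step I expect to require the most care, is verifying that $g(e)\neq\Phi_{e}(e)$ whenever $\Phi_{e}(e)\downarrow$. If $\Phi_{e}(e)\downarrow=i$, then requirement $R_{e}$ was acted on, so in $G$ we have $x_{ei_{2}}-x_{ei_{1}}=(mk+1)b$ for some $k$. Were the pair $(x_{ei_{1}},x_{ei_{2}})$ monochromatic, then putting $x_{1}=x_{ei_{2}}$, $y_{1}=x_{ei_{1}}$, $x_{2}=mkb$, $y_{2}=2mkb$, and $x_{j}=y_{j}$ for $j>2$ would give
\[
(x_{1}-y_{1})+(x_{2}-y_{2})=(mk+1)b-mkb=b,
\]
a pairwise monochromatic solution: the first pair by assumption, the second because $mkb$ and $2mkb$ both lie among the uniformly colored multiples of $mb$, and the remaining pairs trivially. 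This contradicts the choice of $c$, so the pair indexed by $i$ is not monochromatic and $g(e)\neq i$. The delicate point is that this forcing succeeds precisely because the offset imposed by $R_{e}$ is $\equiv 1\pmod m$ for the \emph{same} $m$ governing the coloring; thus one must check that the transformation defining $h_{s+1}$ realizes an offset of exactly this form while keeping $h_{s+1}$ injective, which is the role of choosing $k$ fresh and large as in Claim 3.0.2.
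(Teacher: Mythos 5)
Your proposal is correct and takes essentially the same route as the paper's own proof: the group structure is inherited from the Theorem 3.1 argument, $g(e)$ is defined as the least index of a monochromatic witness pair (total by pigeonhole on $2n+1$ witnesses versus $2n$ colors, and computable from $c$), and diagonal non-computability is forced by combining the pair $(x_{ei_{1}},x_{ei_{2}})$ with the pair $(mkb,2mkb)$ to produce a forbidden pairwise monochromatic solution, after which Jockusch's theorem yields PA degree. Your write-up is in places slightly more explicit than the paper's---padding the solution with $x_{j}=y_{j}$ for $j>2$ and flagging that the offset realized by $R_{e}$ must be $\equiv 1 \pmod{m}$ while preserving injectivity---but the decomposition and the key ideas are identical.
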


\noindent Similarly to the previous theorem, we get that the constructed group
$G$ is isomorphic to $\mathbb{Z}^{(\omega)}$.
\end{proof}

As mentioned in the introduction, the additional condition that we added in Theorem 3.2 is actually shown in the original proof of Straus' theorem. This proof heavily relies on the algebraic structure of the group and Theorem 3.2 suggests that any other similar algebraic proofs will also give us PA degrees. However, it would be very interesting to see whether there is a combinatorial proof, which does not rely on the algebra, that allows us to build simpler witnesses for Straus' original theorem. 

Similarly to Theorem 3.1*, by taking $f_1,f_2,\dots,f_n$ to be the identity mapping, we obtain * versions of Theorem 3.2 and Theorem 3.3.

\begin{manualthm}{3.2*}
Let $n\in\mathbb{N},n\geq2$. There is a computable group $(G,+,0)$, an element $b\in G,b\neq0$ and computable mappings $f_1,f_2,\dots,f_n$ such that, if $c$ is a $2n$-coloring
of $G$ for which the equation 
$$(f_1(x_{1})-f_1(y_{1}))+\dots+(f_n(x_{n})-f_n(y_{n}))=b$$
has no pairwise monochromatic solutions and there is an $m>0$ such
that for every $k\in\mathbb{Z}$, all the $kmb$ are colored with
the same color, then $c$ has PA degree.
\end{manualthm}

\begin{manualthm}{3.3*}
Let $n\in\mathbb{Z},n\geq2$. There is a computable abelian
group $(G,+,0)$, an element $b\in G,b\neq0$ and computable mappings $f_1,f_2,\dots,f_n$ such that, if $c$ is a finite coloring
of $G$ for which the equation 
$$(f_1(x_{1})-f_1(y_{1}))+\dots+(f_n(x_{n})-f_n(y_{n}))=b$$
has no pairwise monochromatic solutions and there is an $m>0$ such
that for every $k\in\mathbb{Z}$, all the $kmb$ are colored with
the same color, then $c$ has PA degree. 
\end{manualthm}

The PA degrees are again the best possible bounds for Straus* in the case $n\geq 2$ with this additional condition added.

\vspace{1cm}

\section{The Reverse Mathematics of Straus' Theorem}

\vspace{5mm}

In this section, we will investigate the reverse mathematics of Straus' theorem. The PA degrees as bounds in the previous theorems already indicate that Straus' theorem might be equivalent to $\textup{WKL}_0$ over $\textup{RCA}_{0}$. Indeed, we show that the case $n=1$ is equivalent to $\textup{WKL}_0$ (Theorem 4.1) and we show one direction of the equivalence in the case $n\geq 2$ (Theorem 4.2). Then, using the reverse mathematics of Jockusch's theorem, we give a reverse-mathematical version of Theorem 3.2 and Theorem 3.3.

\begin{manualthm}{4.1}
Over $\textup{RCA}_{0}$, $\textup{WKL}_0$ is equivalent to the following
statement:

 For every abelian group $G$ and every element $b\in G,b\neq0$,
there is a $k-$coloring of $G$ for which the equation $x-y=b$ has
no monochromatic solutions, where $k=2$ if $\ordb=\infty$ or $\ordb$
is even and $k=3$ if $\ordb$ is odd.
\end{manualthm}
\begin{proof}

Assume $\textup{WKL}_{0}$. Let $G=\{a_{1},a_{2},\dots\}$ be an abelian
group and let $b\in G,b\neq0$ be an element of infinite order. Consider
the tree $T$ of all $2-$colorings of $G$ with no monochromatic
solutions to $x-y=b$, where every level of the tree represents an element of $G$. We need to show that this tree is infinite,
and then by an application of Weak K{\"o}nig's lemma, the statement $\textup{(ii)}$ follows for the case when $\ordb=\infty$.
For $m\in\mathbb{N},m>0$, consider the set $\{a_{1},\dots,a_{m}\}$ and
consider the following graph on $m$ vertices. Let the vertices of the graph be $\{a_{1},\dots,a_{m}\}$ and say $a_{i}$ and $a_{j}$ for $i,j\in\{1,\dots,m\}$
are connected if and only if $|a_{i}-a_{j}|=b$. Notice that for every $a_{i},i\in\{1,\dots,m\}$,
there are at most two elements $a_{j},a_{k}\in\{a_{1},a_{1},\dots,a_{m}\}\backslash\{a_{i}\}$
that are connected to it. We will show that this graph contains no loops.
Assume $a_{i_{1}},\dots,a_{i_{l}}$ where $l,i_1,i_2,\dots,i_l\in\{1,\dots,m\}$ form a loop in this exact order ($a_{i_1}$ is connected to $a_{i_2}$, which is connected to $a_{i_3}$ and so on). Without loss of generality,  assume that $a_{i_{1}}-a_{i_{2}}=b$.
Then it must be the case that $a_{i_{2}}-a_{i_{3}}=b$, $a_{i_{3}}-a_{i_{4}}=b,\dots,a_{i_{l-1}}-a_{i_{l}}=b,a_{i_{l}}-a_{i_{1}}=b$.
This is a contradiction since if we add the left hand sides and right hand sides of these equations,
we get $0=lb$, but $b$ has infinite order. Now
we can $2-$color $a_{1},a_{2},\dots,a_{m}$ such that there are no
monochromatic solutions to $x-y=b$ by coloring any two connected
elements with different colors.  Thus $T$ has an element of length $m$ for every $m\in\mathbb{N}$,
so it is infinite. Hence, by Weak  K{\"o}nig's lemma, $T$ has an infinite path, i.e. a $2$-coloring of $G$ with no monochromatic solutions to $x-y=b$.

The cases when \ordb  \hspace{0.5mm} is finite are similar. In the even case, there
might be loops on an even amount of vertices, but this will not cause
a problem since we can always color such a loop with two colors so
that no two neighboring vertices have the same color. In the odd case,
there might be loops on an odd number of vertices, which can always
be colored with three colors so that no two neighboring vertices have
the same color.

For the other direction, assume $\textup{(ii)}$. Notice that in the proof of Theorem 3.1, we
showed that $h(x)=\underset{s\rightarrow\infty}{\lim}h_{s}(x)$
exists, but, even though it made the proof more convenient, that was
not necessary for the construction. With that in mind, it is not hard
to see that the whole proof can be carried out in $\textup{RCA}_{0}$.
Now, consider two $\Sigma_{1}^{0}$ formulas $\varphi_{0}(n)$ and $\varphi_{1}(n)$
in which $X$ does not occur freely, such that $\lnot\exists n(\varphi_{0}(n)\wedge\varphi_{1}(n))$.
We can use the same construction as in Theorem 3.1, but instead of acting
when $\Phi_{e}(e)$ converges for $R_{e}$, we act at stages when we see that $\varphi_0(e)$ or $\varphi_1(e)$ holds. Then the
set $X=\{e|c(x_{e})=c(y_{e})\}$ is a separator for $\varphi_{0}$
and $\varphi_{1}$ and hence $\textup{(ii)}$ implies the $\Sigma_{1}^{0}$-separation
principle, which is equivalent to $\textup{WKL}_{0}$.
\end{proof}

\begin{manualthm}{4.1*}
The following statements are equivalent over $\textup{RCA}_0$:
\begin{itemize}
\item[$\textup{(i)}$] $\textup{WKL}_0$
\item[$\textup{(ii)}$]  For every abelian group $G$, every element $b\in G,b\neq0$ and all mappings $f:G\rightarrow G$,
there is a $k$-coloring of $G$ for which the equation $f(x)-f(y)=b$ has
no monochromatic solutions, where
\[
k=\begin{cases}
\hspace{4mm}2 & \text{if }2\mid \ordb \text{ or } \ordb=\infty\\
\hspace{4mm}3   & \text{if }\ordb \text{ is odd }
\end{cases}.
\]
\end{itemize}
\end{manualthm}

It is worth noting that if $\ordb$ is finite, then $\textup{(ii)}$ in Theorem 4.1 holds computably, so we have the following proposition.

\begin{prop}
For every computable abelian group $G$ and every element $b\in G,b\neq0$ of finite order,
there is a computable $k$-coloring of $G$ for which the equation $x-y=b$ has
no monochromatic solutions, where
\[
k=\begin{cases}
\hspace{2mm}2 & \text{if } \ordb \text{ is even }\\
\hspace{2mm}3   & \text{if }\ordb \text{ is odd }
\end{cases}.
\]
\end{prop}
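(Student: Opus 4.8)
The plan is to exploit the finiteness of the order of $b$: the graph on $G$ in which $x$ and $y$ are joined whenever $|x-y|=b$ (that is, $x-y=\pm b$) decomposes into finite cycles that can be colored uniformly. Write $d=\ordb$; since $b\neq 0$ we have $d\geq 2$. The connected component of any $x\in G$ under the adjacency $u\sim v\iff|u-v|=b$ is exactly the coset $x+\langle b\rangle=\{x,x+b,\dots,x+(d-1)b\}$, whose $d$ elements are pairwise distinct because $b$ has order $d$. On this coset the adjacency relation is precisely the $d$-cycle $x-(x+b)-\dots-(x+(d-1)b)-x$, the closing edge coming from $(x+db)-(x+(d-1)b)=b$. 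Thus, under this adjacency, $G$ is a disjoint union of $d$-cycles, and a coloring with no monochromatic solution to $x-y=b$ is exactly a proper coloring of this graph.

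A $d$-cycle is properly $2$-colorable when $d$ is even and properly $3$-colorable (but not $2$-colorable) when $d$ is odd, which already matches the stated bounds. To turn this into a single computable coloring, I would fix the computable enumeration $G=\{g_1,g_2,\dots\}$ and assign to each $x$ a canonical representative $r_x$, namely the element of the finite set $\{x,x+b,\dots,x+(d-1)b\}$ that is least in this enumeration; the set, and hence $r_x$, is computable since $G$ and $b$ are computable and $d$ is a fixed finite number. I would then compute the offset $k_x\in\{0,\dots,d-1\}$ with $x=r_x+k_xb$ by searching through $r_x,r_x+b,\dots$ until $x$ appears. If $d$ is even, set $c(x)=k_x\bmod 2$; if $d$ is odd, color offsets $0,1,\dots,d-2$ by the alternating pattern $0,1,0,1,\dots$ and give offset $d-1$ the third color $2$.

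Two elements of the same coset receive the same representative, so the offset, and hence the color, is well defined and computable. Properness then follows: if $x-y=b$ then $x$ and $y$ lie in one coset with $k_x\equiv k_y+1\pmod d$, and both schemes are designed so that consecutive offsets around the $d$-cycle receive distinct colors; elements in distinct cosets are never adjacent, so coloring each coset independently is consistent. The only point requiring care is the wrap-around edge between offsets $d-1$ and $0$: in the even case $(d-1)\bmod 2=1\neq 0$, and in the odd case the third color at offset $d-1$ separates it from both neighbors. I expect this wrap-around to be the only genuine obstacle; the remaining verifications are routine, since every search above terminates after at most $d$ steps, which is why the coloring is computable even though the analogous infinite-order construction in Theorem 4.1 required Weak K\"onig's Lemma.
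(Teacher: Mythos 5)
Your proof is correct and takes essentially the same approach the paper intends: the paper obtains this proposition as a remark from the finite-order case of Theorem 4.1, where the graph with edges given by $|x-y|=b$ decomposes into cycles of length $\ordb$, properly $2$-colorable when $\ordb$ is even and $3$-colorable when it is odd. Your coset-representative construction just makes explicit the (routine) computability of coloring each finite component, which the paper leaves unstated.
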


Next, we investigate the reverse mathematics of the full Straus' theorem. The reader might notice that the case $n=1$ is included in the following theorem, even though we already showed this implication in Theorem 4.1. However, the proofs of these theorems are different and the proof of Theorem 4.4 closely follows Straus' original proof sketched below. 

The proof of Theorem 4.4 suggests that any other proof that relies on the algebraic structure will be similar.  However, we cannot exclude the possibility that 
there might be a combinatorial proof that would indicate lower bounds for the case $n\geq2$, which would be very interesting to see.

\begin{manualthm}{4.2}
Over $\textup{RCA}_0$, $\textup{WKL}_{0}$ implies the following statement:

Let $n\in\mathbb{N},n\geq1$. For every abelian group $(G,+,0)$ and every $b\in G,b\neq0,$ there
is a $k$-coloring of $G$ such that the equation
\begin{equation*}
(x_{1}-y_{1})+\dots+(x_{n}-y_{n})=b
\end{equation*}

\noindent has no pairwise monochromatic solutions, where 
\[
k=\begin{cases}
\hspace{4mm}2n & \text{if }2\mid \ordb \text{ or } \ordb=\infty\\
\left\lceil \dfrac{2np}{p-1}\right\rceil  & \text{if }\ordb \text{ is odd and }p\text{ is the largest prime divisor of }\ordb
\end{cases}.
\]
\end{manualthm}
\begin{proof} We start with a sketch of the proof of Straus' theorem since our proof will follow the same idea. We will skip the algebraic details of the proof that we do not directly use. We refer the reader to \cite{cite8}, \cite{cite14}, \cite{cite20} for a deeper dive into these topics.
\vspace{2mm}

\noindent \textit{Sketch of the Proof of Straus' Theorem}. Let $B$ be a subgroup of $G$ that is maximal subject to not containing $b$, and form the quotient $G/B$. Let $\bar{b}$ be the class of $b$ in this quotient group. By maximality, every nontrivial subgroup of $G/B$ must contain $\bar{b}$. It follows that every element of $G$ has finite order. Thus $\bar{b}$ has prime order $p$ and $G/B$ is a $p$-group in which every finitely generated subgroup is cyclic. Hence, if $G/B$ is finite, it is cyclic of order $p^n$ and if it is infinite, it is isomorphic to  $\mathbb{Z}_{p^{\infty}}$. In either case, it is a subgroup of $\mathbb{Q}/\mathbb{Z}$.

If $p=2$, identify $\bar{b}$ with $\dfrac{1}{2}$ in $\mathbb{Q}/\mathbb{Z}$ and color $[0,1)$ by a $2n$-coloring $c$
so that each interval $\left[\dfrac{i-1}{2n},\dfrac{i}{2n}\right)$,
$i=1,2,\dotso,n$ is colored with a different color. Then if $c(\bar{x})=c(\bar{y})$,
we have $|\bar{x}-\bar{y}|<\dfrac{1}{2n}$ and hence if the equation
$\stackrel[i=1]{n}{\sum}(\bar{x}_{i}-\bar{y_{i}})=\bar{b}$ has such
a solution, it follows that

\[
\dfrac{1}{2}=|\bar{b}|\leq\sum_{i=1}^{n}|\bar{x}_{i}-\bar{y_{i}}|<\dfrac{1}{2},
\]

\noindent a contradiction. If $p>2$, identify $\bar{b}$ with $\dfrac{p-1}{2p}$
and color $[0,1)$ by a coloring $c$ with $\left\lceil \dfrac{2np}{p-1}\right\rceil $
colors so that each interval $\left[\dfrac{(i-1)(p-1)}{2np},\min\{\dfrac{i(p-1)}{2np},1\}\right)$,
$i=1,2,\dotso,n$ is colored with a different color. In this case,
too, we get a contradiction.
 The desired coloring of $G$ is the coloring where every element $x\in G$ is colored with the color of its class $\bar{x}$.

Notice that for every prime $p|\ordb$, we can choose $B$ to contain $pb$ as well as all elements whose order is prime to $p$, and since we are modding out by $B$, all of those elements will be colored with the same color.

\vspace{2mm}

We now proceed with the proof of our theorem. Consider the $k$-branching tree of all $k$-colorings of $G$ with no pairwise monochromatic solutions to equation (2.1),
where every level of the tree represents an element of $G$. We want
to show that this tree is infinite and then by an application of Weak K\"onig's Lemma,
the claim follows.

Let $G_{m}=\{0,b,pb,a_{3},\dots,a_{m-1}\}\subset G$ be closed under
inverses, where $p=2$ if $\ordb$ is even or infinite, and $p$ is the largest
prime divisor of \ordb, otherwise. We will call $G_{m}$ a partial
group, where the addition on $G_{m}$ is defined the same way as the
addition in $G$ and for all $x,y\in G_{m}$, $x+y$ is defined if
and only if $x+y\in G_{m}$. We extend $G_m$ to $G'_m$ to satisfy the following condition: for all $x_1,y_1,x_2,y_2\in G_m$, $(x_1-y_1)+(x_2-y_2)\in G'_m$. To achieve this,
start with $G_m$, close under addition once, then close under inverses and finally, close under addition again. Call this partial group $G'_m$. 
We say that $B$ is a partial subgroup
of $G_{m}$ if $B\subset G_{m}$ and is closed under the partial addition
in $G_{m}$. Clearly, if $B$ is a partial subgroup of $G_m$ then it is a partial subgroup of $G'_m$. We want to choose a partial subgroup $B$ that contains all multiples of $pb$, that is maximal
subject to not containing $b$ and such that for any $b_{1},b_{2},b_{3},b_{4}\in B$,
$(b_{1}-b_{2})+(b_{3}-b_{4})\neq b$. Note that there is at least
one such partial subgroup, the group of all multiples of $pb$, although it is not necessarily
maximal. Consider all such partial subgroups of $G_{m}$ and form
the partial quotients that we will denote by $G'_{m}/B$ in the usual
way. If $a\in G'_m$, denote the class of $a$ in the quotient by $\bar{a}$. We say that two classes $\bar{a}$,$\bar{b}\in G'_{m}/B$
are equal if they have a nonempty intersection. We check whether $G'_{m}/B$
is a partial group. Assume not. Then there are elements $x_{1},x_{2},y_{1},y_{2}\in G'_{m}$
such that $\bar{x}_{1}=\bar{x}_{2}$ and $\bar{y}_{1}=\bar{y}_{2}$ but
$\bar{x}_{1}+\bar{y}_{1}\neq\bar{x}_{2}+\bar{y}_{2}$. From this we
get that $x_{1}-x_{2}\in B$, $y_{1}-y_{2}\in B$, so $(x_{1}-x_{2})+(y_{1}-y_{2})$
is in the group generated by $B$ but $(x_{1}-x_{2})+(y_{1}-y_{2})\notin B$.
Since we want $b\notin B$, we have $(x_{1}-x_{2})+(y_{1}-y_{2})\neq b$. However, by our construction of $G'_m$, $(x_{1}-x_{2})+(y_{1}-y_{2})\in G'_m$ so $(x_{1}-x_{2})+(y_{1}-y_{2})$ together with some elements of $B$ satisfies $(b_{1}-b_{2})+(b_{3}-b_{4})=b$. Hence, we can discard $B$ as a partial subgroup that satisfies our conditions.
After we are left with all partial subgroups that satisfy the conditions,
choose one that is maximal. 
\begin{claim}
$\bar{b}$ is in every nontrivial partial subgroup of $G_{m}/B$.
\end{claim}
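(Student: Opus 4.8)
The plan is to reproduce the maximality argument from the full-group sketch, but transported to the partial setting by pulling a nontrivial partial subgroup of the quotient back to a partial subgroup of $G_m$ and then invoking the maximality of $B$. First I would fix a nontrivial partial subgroup $H$ of the partial quotient $G'_m/B$ and form its preimage $\tilde H = \{a \in G_m : \bar a \in H\}$. I would then check that $\tilde H$ is a partial subgroup of $G_m$: closure under inverses is immediate, since $G_m$ is closed under inverses and $H$ is closed under negation, so $\overline{-a} = -\bar a \in H$ whenever $a \in \tilde H$; closure under the partial addition follows from the already-verified well-definedness of the quotient addition, because whenever $a + a'$ is defined in $G_m$ its class equals $\bar a + \bar a'$, which must lie in $H$ as $H$ is a partial subgroup. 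Since every element of $B$ maps to $\bar 0 \in H$ we have $B \subseteq \tilde H$, and nontriviality of $H$ supplies some $a \in \tilde H$ with $\bar a \neq \bar 0$, i.e. $a \notin B$, so $\tilde H \supsetneq B$. Finally $\tilde H$ still contains every multiple of $pb$ because $B$ does.

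Next I would invoke the maximality of $B$. Recall $B$ was chosen maximal among partial subgroups of $G_m$ that contain all multiples of $pb$, omit $b$, and admit no quadruple $b_1,b_2,b_3,b_4$ with $(b_1-b_2)+(b_3-b_4)=b$. Since $\tilde H$ is a strictly larger partial subgroup of $G_m$ that still contains all multiples of $pb$, it cannot satisfy all three constraints, so it must fail one of the latter two: either $b \in \tilde H$, or there exist $b_1,b_2,b_3,b_4 \in \tilde H$ with $(b_1-b_2)+(b_3-b_4)=b$.

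I would then descend each case to the quotient. If $b \in \tilde H$ then $\bar b \in H$ directly. Otherwise, applying the quotient map to $(b_1-b_2)+(b_3-b_4)=b$ yields $(\bar b_1 - \bar b_2)+(\bar b_3 - \bar b_4)=\bar b$; since each $\bar b_i \in H$ and $H$ is closed under the partial operations of $G'_m/B$, it follows that $\bar b \in H$. In either case $\bar b$ lies in $H$, which is exactly the claim.

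The step I expect to demand the most care is the bookkeeping forced by partiality. In an honest group this maximality argument is a single line, but here I must verify at every stage that the sums I form are genuinely defined in $G_m$, in $G'_m$, and in the partial quotient, and that the closure of $\tilde H$ really follows from the well-definedness of the quotient addition rather than being assumed. The double-difference condition imposed on $B$ is precisely what makes the second case usable: it is the partial-group analogue of the statement ``a subgroup properly containing $B$ must contain $b$,'' and without it the enlarged preimage $\tilde H$ could extend $B$ without forcing $\bar b \in H$. The one place where I would need the construction of $G'_m$ explicitly — closing $G_m$ under addition, then inverses, then addition again — is in confirming that the witnesses produced in the second case have all of their intermediate partial sums defined in $G'_m/B$, so that the descent of the equation $(b_1-b_2)+(b_3-b_4)=b$ to the quotient is legitimate.
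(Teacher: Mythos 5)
Your proof takes essentially the same route as the paper's: pull the nontrivial partial subgroup back to a partial subgroup of $G_m$ that strictly contains $B$, and invoke the maximality of $B$. In fact, yours is more careful than the paper's one-line argument, which just asserts that $b \notin M$ and $B \subset M$ contradicts maximality---silently assuming the pullback also satisfies the double-difference condition $(b_1-b_2)+(b_3-b_4)\neq b$---whereas you treat the possible failure of that condition as a separate case and descend the equation $(b_1-b_2)+(b_3-b_4)=b$ to the quotient to get $\bar{b}\in H$ anyway, so the conclusion holds in either case.
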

\begin{proof}
Let $M/B$ be a nontrivial partial subgroup of $G_{m}/B$ and $\bar{b}\notin M/B$.
Then $b\notin M$ and $B\subset M$, which contradicts the maximality
of $B$.
\end{proof}
\begin{claim}
$\bar{b}$ has finite order.
\end{claim}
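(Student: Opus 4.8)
The plan is to argue by contradiction, mirroring the corresponding step in the sketch of Straus' theorem, and to extract finiteness of the order of $\bar b$ from the previous claim that $\bar b$ lies in every nontrivial partial subgroup of the quotient. First I would record the one algebraic input we already have for free: since $B$ was chosen to contain all multiples of $pb$, the element $pb$ lies in $B$, so $\overline{pb} = \bar 0$ in the quotient. This already suggests that the order of $\bar b$ (once it is known to be defined) should divide $p$, which is exactly the prime-order conclusion we are heading toward; the present claim isolates the weaker statement that the order is finite at all, because in a partial group the $p$-fold sum $\bar b + \dots + \bar b$ need not be computable, and so ``order divides $p$'' cannot be asserted directly from $\overline{pb} = \bar 0$.

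Suppose for contradiction that $\bar b$ has infinite order. The key steps, in order, would be: (i) observe that $2b \in G'_m$, since $G'_m$ is obtained from $G_m$ by closing under addition, so that $2\bar b = \overline{2b}$ is a well-defined element of the partial quotient; (ii) note that $2\bar b \neq \bar 0$, for otherwise $\bar b$ would already have order dividing $2$; (iii) form the partial subgroup $\langle 2\bar b\rangle$ generated by $2\bar b$ inside the quotient, which is nontrivial by (ii); (iv) apply the previous claim to conclude $\bar b \in \langle 2\bar b\rangle$; and (v) since every element of $\langle 2\bar b\rangle$ is an even multiple of $\bar b$, deduce $\bar b = 2k\bar b$ for some integer $k$, whence $(2k-1)\bar b = \bar 0$ with $2k-1 \neq 0$. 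This contradicts the assumption that $\bar b$ has infinite order, so $\bar b$ has finite order.

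The main obstacle I anticipate is the bookkeeping forced by the partiality of the operation, not the underlying algebra. Two points need care. The first is reconciling the fact that the previous claim is phrased for $G_m/B$ while the witness $2\bar b$ naturally lives in $G'_m/B$ (when $p>2$ the element $2b$ need not have been placed in $G_m$); I would either restate and reprove that claim for $G'_m/B$, or argue that $\langle 2\bar b\rangle$, being the closure of $\{2\bar b\}$ under the partial operations inside the finite quotient, is a legitimate nontrivial partial subgroup to which the maximality argument applies verbatim. The second is making ``order'' precise in a partial group: I would define it as the least $k \geq 1$ for which the iterated sum $k\bar b$ is defined and equal to $\bar 0$, and then check that step (v) genuinely produces such a $k$ — the equality $\bar b = 2k\bar b$ exhibits a defined even multiple of $\bar b$ equal to $\bar b$ itself, and reorganizing this into a vanishing odd multiple of $\bar b$ is precisely the place where one must confirm that the relevant partial sums are available inside $G'_m/B$. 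Everything else I expect to be routine.
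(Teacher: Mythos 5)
Your proposal has a genuine gap at step (iv), and the premise motivating the whole detour is mistaken. On the premise: the direct argument you set aside is exactly the paper's proof, and it is not blocked by partiality. The element $pb$ is explicitly one of the listed elements of $G_m=\{0,b,pb,a_3,\dots,a_{m-1}\}$, and $B$ was chosen to contain all multiples of $pb$; hence the class $\overline{pb}$ is a perfectly well-defined element of the partial quotient and equals $\bar 0$. No iterated partial sum $\bar b+\dots+\bar b$ is needed, because $p\bar b$ is read as the class $\overline{pb}$ of an element already present in $G_m$, not as a $p$-fold sum computed inside the quotient. Moreover, the notion of finite order actually used downstream (to justify extending $G_m/B$ to $G_m^{*}/B$ by taking linear combinations) is precisely that some nonzero integer multiple of $b$, computed in $G$, lies in $B$ --- which holds immediately. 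That is the paper's one-line proof.

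On the gap: your step (iv) applies the previous claim to $\langle 2\bar b\rangle$, but that claim concerns partial subgroups of $G_m/B$, and its proof rests on the maximality of $B$ among partial subgroups of $G_m$, i.e., among subsets of $G_m$. When $p>2$, the element $2b$ need not belong to $G_m$ at all, so the preimage $M$ of $\langle 2\bar b\rangle$ in $G'_m$ is only a partial subgroup of $G'_m$, and the only object the maximality argument can see is $M\cap G_m$. Nothing forces $M\cap G_m$ to properly contain $B$: the new elements $2b,4b,\dots$ (modulo $B$) may all lie in $G'_m\setminus G_m$, in which case $M\cap G_m=B$ and no contradiction with maximality arises. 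So neither of your proposed repairs works: the statement ``every nontrivial partial subgroup of $G'_m/B$ contains $\bar b$'' does not follow from the verbatim maximality argument, and may simply be false, since $B$ was never required to be maximal relative to $G'_m$. Note also that in the one case where your witness is guaranteed to lie in $G_m$, namely $p=2$ (where $2b=pb$), your step (ii) terminates immediately with $2\bar b=\bar 0$, i.e., your argument collapses back to the paper's direct one; the detour does real work only in the case $p>2$, which is exactly where it breaks.
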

\begin{proof}
Since we chose $B$ such that $pb \in B$, we have $p\bar{b}=\bar{0}$ and hence $\bar{b}$ has finite order.
\end{proof}
\begin{claim}
Every element of $G_{m}/B$ has finite order.
\end{claim}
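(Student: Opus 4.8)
The goal is to prove that every element of $G_m/B$ has finite order, paralleling the corresponding fact in the sketch of Straus' original proof (where maximality of $B$ forces every element of the quotient to have finite order). The key structural input is Claim 4.2.4, which says that $\bar{b}$ lies in every nontrivial partial subgroup of $G_m/B$.

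The plan is to argue by contradiction. Suppose some $\bar{a} \in G_m/B$ has infinite order, meaning no positive multiple of $\bar{a}$ equals $\bar{0}$ within the partial group. Consider the partial subgroup generated by $\bar{a}$ inside $G_m/B$ (the closure of $\{\bar{a}\}$ under the partial addition and inverses). This is a nontrivial partial subgroup, so by Claim 4.2.4 it must contain $\bar{b}$. Hence $\bar{b}$ is expressible as some integer multiple $j\bar{a}$ of $\bar{a}$ for an appropriate $j$. But by Claim 4.2.5, $\bar{b}$ has finite order, say $p\bar{b} = \bar{0}$ (since $pb \in B$). Then $\bar{0} = p\bar{b} = pj\bar{a}$, which exhibits a nontrivial multiple of $\bar{a}$ equal to $\bar{0}$, contradicting the assumption that $\bar{a}$ has infinite order.

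First I would make precise what ``the partial subgroup generated by $\bar{a}$'' means in this setting, since we are working with partial groups where sums are only defined when they land inside the relevant finite set $G_m$ (or $G'_m/B$). The cleanest route is to track the computation back in $G_m$ itself: lift $\bar{a}$ to a representative $a \in G_m$, and use that $a$ has some order in the ambient genuine group $G$ (which, per the setup of Theorem 4.2, is a group all of whose elements have finite order — this is exactly what the sketch establishes via maximality of $B$). The real content is that the partial structure on $G_m$ is inherited from a genuine abelian group $G$, so I can perform the order computation in $G$ and then project down. The main obstacle will be the bookkeeping around partiality: I must verify that the multiples of $\bar{a}$ needed to reach $\bar{b}$ and then to reach $\bar{0}$ actually live inside $G_m$ (or are witnessed inside $G'_m/B$), rather than escaping the finite partial group. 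Because $G_m$ was chosen closed under inverses and we may enlarge it as needed during the tree construction, and because the relevant multiples are bounded, this closure issue is manageable but deserves an explicit check.

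Alternatively — and this may be the shortest path — I would appeal directly to the ambient group structure: every element of $G$ already has finite order by the maximality argument in the sketch (every nontrivial subgroup of $G/B$ contains $\bar{b}$, forcing finite order throughout), so every element of $G_m \subset G$ has finite order, and passing to the quotient $G_m/B$ cannot increase order. Thus $\bar{a} = a + B$ satisfies $(\mathrm{ord}(a))\bar{a} = \overline{(\mathrm{ord}(a))a} = \bar{0}$, giving finite order immediately. I expect the cleanest writeup to combine this observation with Claim 4.2.4 and Claim 4.2.5 to keep the argument self-contained within the partial-group framework, and the main care-point remains ensuring the finitely many multiples involved stay within the partial group $G_m$.
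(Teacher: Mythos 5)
Your second paragraph is, up to phrasing it as a contradiction rather than directly, exactly the paper's proof: the closure of $\{\bar{a}\}$ under the partial operations is a nontrivial partial subgroup, so by the preceding claim $\bar{b}\in\langle\bar{a}\rangle$, i.e.\ $\bar{b}=k\bar{a}$ for some $k\neq 0$, and then $p\bar{b}=\bar{0}$ (from $pb\in B$) forces $pk\bar{a}=\bar{0}$, so $\bar{a}$ has finite order. Had you stopped there, this would be a clean match.

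The problem is the premise you introduce in your third and fourth paragraphs and propose to fold into the final writeup: that every element of the ambient group $G$ has finite order, so that the order computation can be "tracked back" to $G$ and projected down. This is false. In Theorem 4.2, $G$ is an arbitrary abelian group and the case $\ordb=\infty$ is explicitly allowed; the groups constructed in Section 3 are isomorphic to $\mathbb{Z}^{(\omega)}$, which is torsion-free. You have misread the sketch: its sentence "It follows that every element of $G$ has finite order" is a slip for "every element of $G/B$ has finite order," as the next sentence (that $G/B$ is a $p$-group) makes clear. Torsion is created by quotienting by the maximal subgroup $B$; it is not present in $G$ itself --- already for $G=\mathbb{Z}$, $b=1$, $B=p\mathbb{Z}$ the group $G$ is torsion-free while $G/B$ is torsion. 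So a representative $a\in G_m$ of $\bar{a}$ need not have an order at all, your "shortest path" fails at its first step, and $(\mathrm{ord}(a))\bar{a}=\bar{0}$ is meaningless in general. The partiality bookkeeping you worry about needs no appeal to $G$: the set of defined multiples of $\bar{a}$ is by definition a nontrivial partial subgroup of $G_m/B$, so the previous claim applies to it verbatim, and the finite order of $\bar{b}$ comes from the construction of $B$ (it contains all multiples of $pb$), not from any property of $G$. Keep your second paragraph and discard the rest.
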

\begin{proof}
If $\bar{a}\neq\bar{0}\in G_{m}/B$, then $\bar{b}\in\langle\bar{a}\rangle$.
Hence there is an $k\in\mathbb{Z}\backslash\{0\}$ such that $\bar{b}=k\bar{a}$,
and since $\bar{b}$ has finite order, $\bar{a}$ has finite order.
\end{proof}
Now extend $G_{m}/B$ by taking all linear combinations of elements
in $G_{m}/B$ and call the extension $G_{m}^{*}/B$. This is possible
since every element of $G_{m}/B$ has finite order. Note that this does not change the class of $b$, except by possibly adding some elements to it. If $G_{m}^{*}/B$
is a group, proceed with the proof. If not, we discard $B$ as a possible candidate
and start the construction all over until we get a $B$ which satisfies
all of our conditions including $G_{m}^{*}/B$ being a group.
\begin{claim}
$\textup{RCA}_{0}$ proves the following theorem: Let $p$ be a prime
number and $G$ a group such that $|G|=n$. Then $p|n$ if and only
if there is an element $g$ of $G$ (and hence a subgroup) such that
$ord(g)=p$. 
\end{claim}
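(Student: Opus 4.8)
The statement has two directions. The easy direction, that the existence of an element $g$ with $\textup{ord}(g)=p$ forces $p\mid n$, I would obtain from the coset-counting argument behind Lagrange's theorem: the left cosets of the cyclic subgroup $\langle g\rangle$ partition $G$ into blocks, each of which is the image of $\langle g\rangle$ under a left translation and hence has exactly $p$ elements, so $n=p\cdot(\text{number of cosets})$. Since $G$ is finite and every quantifier ranges over $G$, this partition-and-count argument is purely bounded and goes through in $\textup{RCA}_0$ with $\Sigma^0_1$-induction.

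For the hard direction (Cauchy's theorem), which must work for arbitrary, possibly non-abelian $G$, the plan is to formalize McKay's counting proof. First I would form the finite set $S=\{(x_1,\dots,x_p)\in G^p : x_1x_2\cdots x_p=e\}$, which exists by $\Delta^0_1$-comprehension since membership is decided by the (finite) group operation. The projection $(x_1,\dots,x_p)\mapsto(x_1,\dots,x_{p-1})$ is a bijection from $S$ onto $G^{p-1}$, because $x_p$ is forced to equal $(x_1\cdots x_{p-1})^{-1}$; counting through this bijection gives $|S|=n^{p-1}$. I would then define the cyclic shift $\sigma(x_1,\dots,x_p)=(x_2,\dots,x_p,x_1)$ and check that it maps $S$ into $S$. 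This is the one step where non-abelianness matters, and it is handled by the conjugation identity $x_2\cdots x_px_1=x_1^{-1}(x_1\cdots x_p)x_1=x_1^{-1}ex_1=e$, so that $\sigma$ restricts to a permutation of $S$.

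Next I would analyze the orbits of the iterates of $\sigma$. A tuple is fixed by $\sigma$ exactly when it is constant, $(x,\dots,x)$ with $x^p=e$, so the number $r$ of fixed points equals $|\{x\in G:x^p=e\}|$. Every non-fixed tuple lies in an orbit of exactly $p$ elements: if $\sigma^d(s)=s$ for some $0<d<p$ then, since $p$ is prime, $\gcd(d,p)=1$, and combining $\sigma^d(s)=s$ with $\sigma^p(s)=s$ forces $\sigma(s)=s$, contradicting that $s$ is not fixed. Partitioning $S$ into orbits then yields $n^{p-1}=|S|=r+mp$, where $m$ counts the $p$-element orbits. Because $p\mid n$ and $p-1\ge 1$ we have $p\mid n^{p-1}$, hence $p\mid r$; and since the constant tuple $(e,\dots,e)$ is a fixed point we have $r\ge 1$, so in fact $r\ge p\ge 2$. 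Therefore there is some $x\ne e$ with $x^p=e$, and since $\textup{ord}(x)$ divides the prime $p$ while $x\ne e$, we conclude $\textup{ord}(x)=p$, as required.

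Finally, I would verify that the whole argument formalizes in $\textup{RCA}_0$. All objects are bounded: $S\subseteq G^p$ is a finite set of size at most $n^p$, the shift $\sigma$ is a $\Delta^0_0$-definable permutation of it, and the orbit relation ``$t=\sigma^i(s)$ for some $i<p$'' is bounded. The remaining content is finite combinatorics — establishing bijections, partitioning a finite set into a bounded number of blocks, and summing block sizes — which is available from the $\Sigma^0_1$-induction of $\textup{RCA}_0$. I expect the main obstacle to be the bookkeeping of this orbit-partition count rather than any single mathematical idea: one must check inside $\textup{RCA}_0$ that the orbit relation is an equivalence relation on $S$, that each class has size $1$ or $p$ (the $\gcd$ argument above being the crux, and the only place primality of $p$ is used), and that summing the class sizes recovers $|S|$. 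Since none of these steps involves an unbounded search, each reduces to a finite induction, and the theorem follows.
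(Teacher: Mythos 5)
Your proof is correct, and it is considerably more self-contained than the paper's, which consists of a single sentence citing Lagrange's and Cauchy's theorems from a standard algebra text (Dummit--Foote) and implicitly asserting that, since everything in sight is finite, the classical proofs go through in $\textup{RCA}_0$. Your Lagrange direction is exactly the coset count the paper has in mind. For the Cauchy direction you make a choice the paper leaves unexamined: rather than the textbook route (induction on the group order via the class equation, or deducing Cauchy from Sylow), you formalize McKay's cyclic-shift counting argument. This is a good choice precisely for the reason you give: every object involved ($S \subseteq G^p$, the shift $\sigma$, the orbit relation) is bounded and $\Delta^0_1$-definable, so the whole argument is finite combinatorics plus $\Sigma^0_1$-induction, with no detour through conjugacy classes, centers, or quotients. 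The one ingredient you use silently is that $\textup{RCA}_0$ proves exponentiation total (needed to form $G^p$ and to speak of $n^{p-1}$), which is standard. What the paper's citation buys is brevity; what your argument buys is an actual verification that the proof formalizes, which is the real content of the claim --- the truth of Lagrange's and Cauchy's theorems was never in question, only their provability in the weak base theory. In that sense your proposal substantiates a point the paper takes on faith.
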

\begin{proof}
This statement follows from theorems by Lagrange and Cauchy that are standard in abstract algebra and can be found in \cite{cite8}.
\end{proof}
\begin{claim}
The order of $\bar{b}$ is prime and, in particular, $\ordbb = p$.
\end{claim}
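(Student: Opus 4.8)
The plan is to reduce everything to the genuine finite cyclic group $\langle\bar{b}\rangle$ sitting inside the quotient group $G_{m}^{*}/B$, which we have arranged to be an actual group, and to run the standard Straus argument there. By the preceding claims, $\bar{b}$ has finite order, and since $b\notin B$ we have $\bar{b}\neq\bar{0}$, so $\ordbb$ is a well-defined integer $n>1$. The heart of the argument is that the maximality of $B$ forces every nontrivial subgroup of the quotient to contain $\bar{b}$; combined with Cauchy's theorem (available over $\textup{RCA}_{0}$ by the Lagrange--Cauchy claim above), this will rule out any proper nontrivial subgroup of $\langle\bar{b}\rangle$ and hence pin $n$ down to a prime.

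Concretely, I would argue by contradiction. Suppose $n=\ordbb$ is composite, and pick a prime $q$ dividing $n$ with $q<n$. Since $\langle\bar{b}\rangle$ is a finite cyclic group of order $n$ and $q\mid n$, Cauchy's theorem produces an element of order $q$, generating a subgroup $H\leq\langle\bar{b}\rangle$ with $|H|=q$. Because $q\geq 2$, the subgroup $H$ is nontrivial, so by the subgroup property $\bar{b}\in H$; but then $\ordbb$ divides $|H|=q$, giving $n\leq q<n$, a contradiction. Therefore $n$ is prime.

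To identify the prime explicitly, I would invoke the computation $p\bar{b}=\bar{0}$ from the earlier claim, which follows from $pb\in B$. This shows that $n=\ordbb$ divides $p$; since $n$ is prime and $p$ is prime (note $p=2$ in the even or infinite case, and $p$ is the largest prime divisor of $\ordb$ otherwise), the only possibility is $n=p$, so $\ordbb=p$ as claimed.

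The main obstacle I anticipate is reconciling the partial-group bookkeeping with the genuine group $G_{m}^{*}/B$. The statement that $\bar{b}$ lies in every nontrivial subgroup was proved for the partial quotient $G_{m}/B$, whereas the subgroup $H$ produced by Cauchy consists of multiples of $\bar{b}$ that need not come from elements of $G_{m}$, so the property must be re-established inside $G_{m}^{*}/B$. I would handle this by pulling $H$ back to a subgroup $M\supseteq B$ of $G_{m}^{*}$ and observing that if $\bar{b}\notin H$ then $b\notin M$ while $B\subsetneq M$, contradicting the maximality of $B$ exactly as in the original subgroup claim; this also requires checking that $B$ remains maximal among the relevant subgroups of the extended structure. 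A secondary point is to confirm that Cauchy's theorem, as stated for finite groups in $\textup{RCA}_{0}$, applies to the concretely presented finite cyclic group $\langle\bar{b}\rangle$, which is precisely what the Lagrange--Cauchy claim guarantees.
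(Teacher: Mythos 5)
Your proposal is correct, and its primality half is essentially the paper's own argument: the paper likewise combines Cauchy's theorem (the Lagrange--Cauchy claim, available in $\textup{RCA}_{0}$) with the fact that every nontrivial subgroup of the quotient contains $\bar{b}$; the only difference is that it extracts an element of prime order $p_{1}$ from the full group $G_{m}^{*}/B$, using $p_{1}\mid|G_{m}^{*}/B|$, rather than from $\langle\bar{b}\rangle$. Where you genuinely differ is the identification $\ordbb=p$: the paper argues that if $kb\in B$ with $1<k<p$ then one could ``eventually get $b\in B$'' (an informal Bezout-style step), whereas you note that $pb\in B$ forces $p\bar{b}=\bar{0}$, hence $\ordbb\mid p$. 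Your route is cleaner, and it actually makes the Cauchy argument redundant: $p$ is prime in every case of the theorem, and $\ordbb>1$ since $b\notin B$, so $\ordbb\mid p$ alone gives $\ordbb=p$, which is prime --- the entire claim. You are also right that the subgroup property was established only for partial subgroups of $G_{m}/B$, while the subgroups used here sit in $G_{m}^{*}/B$; the paper applies that claim across this gap without comment. Your pullback repair is the natural fix, but note the wrinkle you only half-anticipate: $B$ is maximal only among partial subgroups of $G_{m}$, so the pullback $M$ contradicts maximality only if $M\cap G_{m}$ properly contains $B$, and a priori the nontrivial elements of $H$ could all be classes of elements of $G_{m}^{*}\setminus G_{m}$. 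Your divisibility argument needs no subgroups at all, so it sidesteps this issue entirely --- one more reason to prefer it.
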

\begin{proof}
Assume not and let $\ordbb=p_{1}p_{2}m$, where $p_{1}$ and $p_{2}$
are prime and $m$ is a positive integer. Then $p_{1}||G_{m}^{*}/B|$
so there is an element $\bar{g}\in G_{m}^{*}/B$ such that $\ordg=p_{1}$.
Hence $|\langle\bar{g}\rangle|=p_{1}$ and $\bar{b}\in\langle\bar{g}\rangle$, a contradiction.

Now, $\ordbb = p$ since if there is another $kb \in B$ such that $1<k<p$, then we would be able to eventually get $b\in B$, which contradicts the choice of $B$.  
\end{proof}
\begin{claim}
$G_{m}^{*}/B$ is a $p$-group.
\end{claim}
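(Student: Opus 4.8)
The plan is to show that every nontrivial element of $G_m^*/B$ has order a power of $p$. Since $G_m^*/B$ is obtained by closing a finite partial group under finitely many operations and then adjoining linear combinations of finite-order elements, it is finite, so this is exactly the assertion that $G_m^*/B$ is a $p$-group. The argument is the partial-group analogue of the step in the sketch of Straus' theorem that passes from ``every nontrivial subgroup contains $\bar b$'' to ``$G/B$ is a $p$-group,'' and it uses only the claims already established together with the Lagrange--Cauchy claim.

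First I would fix a nontrivial element $\bar a \in G_m^*/B$ and let $n = \textup{ord}(\bar a)$, which is finite by the claim that every element of $G_m/B$ (and hence of its finite extension $G_m^*/B$) has finite order. Suppose toward a contradiction that some prime $q \neq p$ divides $n$. The cyclic subgroup $\langle \bar a\rangle$ then has order divisible by $q$, so by the Lagrange--Cauchy claim it contains an element $\bar g$ with $\ordg = q$, and $\langle \bar g\rangle$ is a nontrivial partial subgroup of $G_m^*/B$ of order $q$. By the claim that $\bar b$ lies in every nontrivial partial subgroup, $\bar b \in \langle \bar g\rangle$, whence $\ordbb$ divides $q$; since $\bar b \neq \bar 0$ this forces $\ordbb = q$, contradicting the already-proved fact that $\ordbb = p$. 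Therefore no prime other than $p$ divides $n$, so $n$ is a power of $p$, and as $\bar a$ was arbitrary, $G_m^*/B$ is a $p$-group.

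The step that needs the most care is the transfer of the ``$\bar b$ in every nontrivial subgroup'' property from $G_m/B$ to the extended group $G_m^*/B$, since the maximality argument was phrased for partial subgroups of $G_m/B$ whereas the offending subgroup $\langle \bar g\rangle$ lives in the extension. I expect this to cause no genuine trouble, as it is precisely the inclusion already used in the proof that $\ordbb$ is prime, and the extension adjoins only linear combinations of finite-order elements without changing the class of $b$ except by absorbing elements into it; I would make explicit that any prime-order cyclic subgroup of $G_m^*/B$ still forces $\bar b$ inside it by the same maximality-of-$B$ reasoning. Finally, I would note that the whole argument is finitary: it quantifies over the finite group $G_m^*/B$ and invokes only the Lagrange--Cauchy claim, so it goes through verbatim in $\textup{RCA}_0$.
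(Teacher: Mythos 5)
Your proposal is correct and follows essentially the same route as the paper: the paper's proof assumes $|G_{m}^{*}/B|=p^{k}p_{1}$ with $p_{1}\neq p$ and repeats the reasoning of the preceding claim, namely using the Lagrange--Cauchy claim to produce an element $\bar{g}$ of prime order $p_{1}\neq p$, whose cyclic subgroup must contain $\bar{b}$, contradicting $\ordbb=p$. Your only deviation is cosmetic — you argue element-by-element (every element has $p$-power order) rather than on the group order, and you explicitly flag the transfer of the ``$\bar{b}$ in every nontrivial subgroup'' property to $G_{m}^{*}/B$, which the paper uses implicitly.
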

\begin{proof}
Assume $|G_{m}^{*}/B|=p^{k}p_{1}$, where $p_{1}\neq p$.
Then the claim follows by the same reasoning as the previous one.
\end{proof}
\begin{claim}
$G_{m}^{*}/B$ is cyclic.
\end{claim}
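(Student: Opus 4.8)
The plan is to recognize $A:=G_m^*/B$ as a finite abelian $p$-group possessing a \emph{unique} subgroup of order $p$, and to deduce cyclicity from this. From the preceding claims we already have that $A$ is a finite abelian $p$-group, that $\bar b$ has order exactly $p$, and that $\bar b$ lies in every nontrivial subgroup of $A$ (this last fact is precisely what was invoked in the proofs that $\ordbb=p$ and that $A$ is a $p$-group, where one used $\bar b\in\langle\bar g\rangle$ for an arbitrary nonzero $\bar g$). Consequently $\langle\bar b\rangle$ is the unique subgroup of order $p$: any subgroup of order $p$ is nontrivial, hence contains $\bar b$, and having the same order $p$ as $\langle\bar b\rangle$ it must equal $\langle\bar b\rangle$.

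It then suffices to prove the standard fact that a finite abelian $p$-group with a unique subgroup of order $p$ is cyclic. I would do this by choosing an element $\bar g\in A$ of maximal order and using that a maximal-order element of a finite abelian $p$-group generates a direct summand, say $A=\langle\bar g\rangle\oplus K$. If $K\neq\{0\}$, then $K$, being nontrivial, contains $\bar b$; but $\langle\bar g\rangle$ also contains an element of order $p$ (a suitable multiple of $\bar g$), which by uniqueness again equals a generator of $\langle\bar b\rangle$, so $\bar b\in\langle\bar g\rangle$. This forces $\bar b\in\langle\bar g\rangle\cap K=\{0\}$, contradicting $\bar b\neq\bar 0$. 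Hence $K=\{0\}$ and $A=\langle\bar g\rangle$ is cyclic. (Alternatively one may invoke the classification $A\cong\mathbb{Z}_{p^{e_1}}\oplus\dots\oplus\mathbb{Z}_{p^{e_r}}$ directly: each nontrivial summand contributes its own order-$p$ subgroup inside a distinct direct factor, so $r\geq 2$ would yield two distinct subgroups of order $p$, and uniqueness forces $r=1$.)

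The group-theoretic content here is routine; the point requiring care is that these finite-group arguments go through in $\textup{RCA}_0$, as is needed for the surrounding proof of Theorem 4.2. Since $A$ is a finite object, its subgroup lattice and the decomposition into cyclic factors are decidable from a code for $A$, and the relevant instances are provable in $\textup{RCA}_0$ in the same way the Lagrange--Cauchy facts were used in the previous claim, with $\Pi^0_1$-induction on $|A|$ (or on the number of cyclic factors) replacing any appeal to ordinary induction. The only subtlety worth flagging is that we are using ``$\bar b$ lies in every nontrivial subgroup of $A$'' for the \emph{extension} $G_m^*/B$ rather than for $G_m/B$; but this is exactly the property already relied upon in establishing that $A$ is a $p$-group, so no additional argument is required, and I expect no genuine obstacle beyond this bookkeeping.
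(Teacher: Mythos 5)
Your proposal is correct, but it takes a genuinely different route at the key step. Both you and the paper begin identically: $\langle\bar b\rangle$ is the unique order-$p$ subgroup of $A=G_m^*/B$ because $\bar b$ lies in every nontrivial subgroup, and both consider an element $x$ (your $\bar g$) of maximal order $p^{n_1}$, observing that $\langle p^{n_1-1}x\rangle=\langle\bar b\rangle$. The paper then finishes with a self-contained computation: if some $y\notin\langle x\rangle$ exists, take one of minimal order $p^{n_2}$; then $p^{n_2-1}y$ generates an order-$p$ subgroup, hence $p^{n_2-1}y=kp^{n_1-1}x$ with $0<k<p$, so $y-kp^{n_1-n_2}x$ has order at most $p^{n_2-1}$ and by minimality lies in $\langle x\rangle$, forcing $y\in\langle x\rangle$, a contradiction. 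You instead invoke the direct-summand lemma (a maximal-order element of a finite abelian group generates a direct summand $A=\langle\bar g\rangle\oplus K$), or alternatively the full classification of finite abelian $p$-groups, and derive the contradiction from uniqueness of the order-$p$ subgroup. What the paper's route buys is self-containedness: the standard proof of the direct-summand lemma is essentially the very computation the paper performs inline, so your appeal defers rather than avoids that work, and in the reverse-mathematical setting it adds one more background theorem whose provability in $\textup{RCA}_0$ must be checked (harmless here, since $A$ is finite, as you note). What your route buys is brevity and a cleaner conceptual statement (a finite abelian $p$-group with a unique order-$p$ subgroup is cyclic). The subtlety you flag --- that the containment property established for $G_m/B$ is being applied to the extension $G_m^*/B$ --- is real, and the paper indeed uses it silently both here and in the two preceding claims.
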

\begin{proof}
Let $x\in G_{m}^{*}/B$ be an element of maximal order, say $p^{n_{1}}$. Then the order of $\langle p^{n_{1}-1}x \rangle $ is $p$. Since $H =\langle \bar{b} \rangle$ is contained in every subgroup and its order is $p$, it follows that $H =  \langle p^{n_{1}-1}x \rangle$. Let $y\in G_{m}^{*}/B$ be such that $y\notin \langle x \rangle$ and $y$ is of minimal order, say $p^{n_{2}}$. Then there is a $0<k<p$ such that $p^{n_{2}-1}y=kp^{n_{1}-1}x$. From there we get $p^{n_{2}-1}(y-kp^{n_{1}-n_{2}}x)=0$. Now, by minimality of $n_{2}$, it must be that $y-kp^{n_{1}-n_{2}}x\in \langle x \rangle$, so $y\in \langle x \rangle$ and hence $G_{m}^{*}/B$ is cyclic.
\end{proof}

We are now going to define a partial homomorphism from $G_{m}^{*}/B$
to $\mathbb{Q}/\mathbb{Z}$. If $p=2$, send $\bar{b}$ to $\dfrac{1}{2}$ and if $p>2$ send $\bar{b}$ to
$\dfrac{p-1}{2p}$. In each case, color $\mathbb{Q}/\mathbb{Z}$ by $c$ with $2n$ or $\left\lceil \dfrac{2np}{p-1}\right\rceil$ colors the same way as it is colored in the original proof of Straus' theorem.
For all $x\in G_{m}$,
let $c(x)=c(\bar{x})$. Now, if we assume $c(x)=c(y)$, then in each case we get a contradiction. 

Thus, for every $m$, there is a $k$-coloring of $G_{m}$ with no
pairwise monochromatic solutions to $(1)$. This means that our tree is
infinite and the claim follows from Weak K\"onig's Lemma.
\end{proof}

In the following theorem, we investigate the reverse mathematical version of Theorem 3.2. The statements $\textup{(iii)}_{N, N\geq k}$ form an infinite schema of statements for every $N\geq k$, where $k$ is as stated below.

\begin{manualthm}{4.3}
The following statements are equivalent over $\textup{RCA}_0,\textup{:}$
\begin{itemize}
\item[$\textup{(i)}$] $\textup{WKL}_0$
\item[$\textup{(ii)}$] Let $n\in \mathbb{N},n\geq2$. Let $(G,+,0)$ be an abelian group and $b\in G, b\neq 0$. Then there is a $k$-coloring of $G$ with no pairwise monochromatic solutions to 
\begin{equation*}
(x_{1}-y_{1})+\dots+(x_{n}-y_{n})=b
\end{equation*}
that satisfies the following condition: there is an $m>0$ such that for every $l\in \mathbb{Z}$, all the $lmb$ are colored with the same color, where
\[
k=\begin{cases}
\hspace{4mm}2n & \text{if }2\mid \ordb \text{ or } \ordb=\infty\\
\left\lceil \dfrac{2np}{p-1}\right\rceil  & \text{if }\ordb \text{ is odd and }p\text{ is the largest prime divisor of }\ordb
\end{cases}.
\]

\item[$\textup{(iii)}_{N \geq k}$]  Let $n\in \mathbb{N},n\geq2$. Let $(G,+,0)$ be an abelian group and $b\in G, b\neq 0$. Then there is an $N$-coloring of $G$ with no pairwise monochromatic solutions to 
\begin{equation*}
(x_{1}-y_{1})+\dots+(x_{n}-y_{n})=b
\end{equation*}
that satisfies the following condition: there is an $m>0$ such that for every $l\in \mathbb{Z}$, all the $lmb$ are colored with the same color, where 
\[
k=\begin{cases}
\hspace{4mm}2n & \text{if }2\mid \ordb \text{ or } \ordb=\infty\\
\left\lceil \dfrac{2np}{p-1}\right\rceil  & \text{if }\ordb \text{ is odd and }p\text{ is the largest prime divisor of }\ordb
\end{cases}.
\]
\end{itemize}

\end{manualthm}

\begin{proof}

The fact that $\textup{(ii)}\Rightarrow\textup{(iii)}_N$ is trivial since if you have a $k$-coloring that satisfies the conditions above, that coloring can be considered an $N$-coloring for every $N\geq k$. Using Theorem 4.2 and the fact that Straus' theorem proves the additional condition in $\textup{(ii)}$, we get $\textup{(i)}\Rightarrow \textup{(ii)}$. Finally, using similar reasoning as in Theorem 4.1, $\textup{(ii)}\Rightarrow \textup{(i)}$ and $\textup{(iii)}_N\Rightarrow \textup{(i)}$  follow from the proof of Theorem 3.2 and the fact that $\textup{WKL}_0$ is equivalent to the existence of a $\textup{DNC}_k$ function for a fixed $k\in\mathbb{N},k\geq 2$.
\end{proof}  

Next, we give a reverse-mathematical version of Theorem 3.3.

\begin{manualthm}{4.4}
The following statements are equivalent over $\textup{RCA}_0\textup{:}$
\begin{itemize}
\item[$\textup{(i)}$] $\textup{WKL}_0$
\item[$\textup{(ii)}$]  Let $n\in \mathbb{N},n\geq2$. Let $(G,+,0)$ be an abelian group and $b\in G, b\neq 0$. Then there is a finite coloring of $G$ with no pairwise monochromatic solutions to 
\begin{equation*}
(x_{1}-y_{1})+\dots+(x_{n}-y_{n})=b
\end{equation*}
that satisfies the following condition: there is an $m>0$ such that for every $l\in \mathbb{Z}$, all the $lmb$ are colored with the same color.
\end{itemize}
\end{manualthm}

\begin{proof}
From Theorem 4.3, we have that $\textup{(i)}\Rightarrow\textup{(ii)}$. The other direction follows directly from the proof of Theorem 3.3 together with Lemma 2.3.
\end{proof}

As previously, the * versions of these theorems hold as well.

\vspace{1cm}

\section{The Path Ahead}

\vspace{5mm}

As mentioned before, this paper is the first paper that analyses partition regularity over algebraic structures from a computability-theoretic and reverse-mathematical point of view. That means that there are many interesting directions for future research and some pressing open questions. Here are some of them.
\subsection{Optimal bounds for the full Straus' theorem}

While we have made significant progress on the computability-theoretic and reverse-mathematical bounds of different cases of Straus' Theorem, there are some questions regarding the full theorem that are still open.

\begin{openq}
Are the PA degrees the best possible computability-theoretic bound for the full Straus' theorem? If not, what are the best possible bounds?
\end{openq}

Towards this, proposition 3.1 says that the PA degrees are an upper bound.

On the reverse mathematics side, the following question is still open.

\begin{openq}
Is the full Straus' theorem equivalent to $\textup{WKL}_0$ over $\textup{RCA}_0$?
\end{openq}

Theorem 4.2 gives us one direction of this possible equivalence.

\subsection{Reverse mathematics over $\textup{RCA}_{0}^*$}

Throughout the paper, we work over the standard weak base system of reverse mathematics $\textup{RCA}_0$. However, we can work over weaker systems that can further reveal connections and subtleties between mathematical statements. One such system is $\textup{RCA}_{0}^*$. The system $\textup{RCA}_{0}^*$ consists of the same axioms as $\textup{RCA}_0$ but with weaker induction instead of $\Sigma_0^1$-induction. In particular, we replace $\Sigma_0^1$-induction with $\Sigma_0^0$-induction, we add a symbol for exponentiation in the language of second-order arithmetic, and we add the rules for exponentiation to our first-order axioms. 

An important thing to note is that $\textup{RCA}_{0}^*$ does not prove $\Pi_0^1$-induction and this is something we relied on in Lemma 2.3, and therefore, Theorem 4.4. Thus, we have the following open questions.

\begin{openq}
Does Lemma 2.3 hold over $\textup{RCA}_{0}^*$?
\end{openq}

\begin{openq}
Does Theorem 4.4 hold over $\textup{RCA}_{0}^*$?
\end{openq}

\subsection{Computability-theoretic and reverse-mathematical analysis of the Leader-Russell theorem}

A computability-theoretic and reverse-mathematical analysis of the Leader-Russell theorem is a natural next step to make.
If Theorem 3.1 could be translated to a statement about computable
rings, then we would have a result that states that the Leader-Russell
theorem does not hold computably. However, the proofs that we have
for these theorems do not immediately generalize to commutative rings and the question of obtaining
a commutative ring from an infinitely generated abelian group seems to be a very deep algebraic question. That suggests that we might
need a completely new construction of a computable commutative ring
for which all the colorings with monochromatic solutions to a well-chosen
system of equations are noncomputable.

\begin{openq}

What are the best possible computability-theoretic bounds for the Leader-Russell theorem? What about the reverse-mathematical bounds?

\end{openq}

\subsection{Other more refined reducibilities}

The proofs of Theorem 3.2 and Theorem 3.3 heavily rely on Jockusch's theorem that says that the $\textup{DNC}_{k}$
degrees coincide with the $\textup{PA}$ degrees. These notions are equivalent
when considering Turing reducibility, which is the most widely used
measure of complexity in computability theory. There are other finer
reducibilities, however, for which the $\textup{DNC}_{k}$ degrees
are different for every $k\in\mathbb{N},k\geq 2$. One example is the Weihrauch degrees.

\begin{defn} Let $\varphi$ and $\psi$ be two arithmetic formulas in second-order arithmetic. A principle $\textup{P}$ of the form $$(\forall X)[\varphi(X)\rightarrow (\exists Y)\psi(X,Y)]$$
is called a $\Pi^1_2$-\textit{principle} or a $\Pi^1_2$-\textit{problem}. Every $X$ for which $\varphi(X)$ holds is called an \textit{instance} of $\textup{P}$ and given $X$, any $Y$ that satisfies $\psi(X,Y)$ is called a \textit{solution} to $X$.
\end{defn}

\begin{defn} Let $\textup{P}$ and $\textup{Q}$ be two $\Pi^1_2$-problems. We say that $\textup{P}$ is \textit{Weihrauch reducible} to $\textup{Q}$, denoted by $\textup{P} \leq_W \textup{Q}$, if there are Turing functionals $\Phi$ and $\Psi$ such that for every instance $X$ of $\textup{P}$, $\hat{X}=\Phi^X$ is an instance of $Q$ and for every solution $\hat{Y}$ to $\hat{X}$, $\Psi^{X \oplus \hat{Y}}$  is a solution to $X$.
\end{defn}

We point out that there are more general definitions of Weihrauch reducibility in the literature, see for example \cite{cite4}.

In \cite{cite13}, Jockusch also showed that given $k\geq2$, there is no Turing functional $\Phi$ such that $\Phi(g)\in \textup{DNC}_k$ for all $g\in\textup{DNC}_{k+1}$. If by $\textup{DNC}_k$ we denote the $\Pi^1_2$-principle that says that for every $X$, there exists a $\textup{DNC}_k$ function computable from $X$, then we can translate this into a statement about Weihrauch reducibility.

\begin{manualthm}{}[$\textup{Jockusch}_W$]

Let $k\in\mathbb{N},k\geq 2$. Then $\textup{DNC}_k\not\leq_W \textup{DNC}_{k+1}$.
\end{manualthm}

Thus, we have the following more general open question.

\begin{openq}
Will our theorems still hold under Weihrauch reducibility instead of Turing reducibility? If not, what can we say about them with respect to the Weihrauch lattice?
\end{openq}

\subsection{The case $b=0$}

Our current research focuses on partition regularity
for inhomogeneous systems ($Ax=b,b\neq0)$, inspired by the recent
result by Leader and Russell. But the homogeneous case $b=0$ is interesting
in its own right. Rado showed that a homogeneous system is partition
regular over $\mathbb{Z}$ if and only if it satisfies a computable
condition called ``the columns condition''. The problem of characterizing all such partition regular systems over different algebraic structures has been extensively studied. In \cite{cite18} and \cite{cite3}, the authors consider commutative rings, in \cite{cite6} and \cite{cite7} abelian groups and in \cite{cite2} vector spaces over finite fields. Byszewski and Krawczyk
in \cite{cite5} showed that the same characterization using the columns condition also holds if we consider
partition regularity over integral domains. Therefore, a similar analysis of
the case $b=0$ over different algebraic structures is a natural possible continuation of our work.

\subsection{The nonlinear case}

Many researchers working in combinatorics have been interested in
the partition regularity of specific nonlinear equations, see for
example, \cite{cite1}, \cite{cite10} and \cite{cite16}. While the characterization
of all partition regular nonlinear equations seems like a daunting
task, similar computability-theoretic and reverse-mathematical analysis
as in our work can be done of the already existing results. It would
be interesting to see whether the computability-theoretic complexity
changes significantly depending on the combinatorial complexity of
the equations.

\end{document}